\numberwithin{equation}{section}
\theoremstyle{plain}
\newtheorem{theorem}{Theorem}[section]
\newtheorem{corollary}[theorem]{Corollary}
\newtheorem{problem}[theorem]{Problem}
\newtheorem{conjecture}[theorem]{Conjecture}
\newtheorem{question}[theorem]{Question}
\theoremstyle{definition}
\newtheorem{remark}[theorem]{Remark}
\newtheorem{example}[theorem]{Example}
\def\darkgreen{green!50!darkgray}
\def\darkmagenta{magenta!50!darkgray}
\def\NN{\mathbb{N}}
\def\ZZ{\mathbb{Z}}
\def\QQ{\mathbb{Q}}
\def\twoa{2a}
\def\twob{2b}
\def\twoc{2c}
\def\fivea{5a}
\def\fiveb{5b}
\title[Coprime-Universal Quadratic Forms]{Coprime-Universal Quadratic Forms}
\author{Matteo Bordignon}
\address{
         Charles University,
         Faculty of Mathematics and Physics,
         Department of Algebra,
         Sokolov\-sk\'a 83, 18600 Praha~8,
         Czech Republic
        }
\address{
        Department of Mathematics, KTH, SE-100 44 Stockholm, Sweden
        }
\email{
    bordig@kth.se
    }
\author{Giacomo Cherubini}
\address{
         Charles University,
         Faculty of Mathematics and Physics,
         Department of Algebra,
         Sokolov\-sk\'a 83, 18600 Praha~8,
         Czech Republic
        }
\address{
        Istituto Nazionale di Alta Matematica ``Francesco Severi'',
        Research Unit Dipartimento di Matematica ``Guido Castelnuovo'',
        Sapienza Universit\`a di Roma, Piazzale Aldo Moro 5, I-00185, Roma
        }
\email{
    cherubini@altamatematica.it
    }
\date{\today}
\subjclass[2020]{Primary 11E20; Secondary 11E25, 11E45, 11F30, 11F66}
\keywords{universal quadratic form, 290-theorem, escalator, truant, theta series, modular form, Shimura lift, Waldspurger}
\newcommand{\addresseshere}{%
  \enddoc@text\let\enddoc@text\relax
}
\begin{document}

\begin{abstract}
Given a prime $p>3$, we characterize positive-definite
integral quadratic forms that are \emph{coprime-universal}
for $p$, i.e.~representing all positive integers coprime to $p$.
This generalizes the $290$-Theorem by Bhargava and Hanke and
extends later works by Rouse ($p=2$) and
De Benedetto and Rouse ($p=3$).
When $p=5,23,29,31$, our results are conditional on the
coprime-universality of specific ternary forms. We prove
this assumption under GRH (for Dirichlet and modular
$L$-functions), following a strategy introduced by Ono and
Soundararajan, together with some more elementary techniques
borrowed from Kaplansky and Bhargava.
Finally, we discuss briefly the problem of representing
all integers in an arithmetic progression.
\end{abstract}

\maketitle

\section{Introduction}

The famous ``$290$-Theorem'', due to Bhargava and Hanke \cite{BH},
states that a positive definite integral quadratic form
represents all positive integers if and only if it represents
a finite set of twenty-nine elements, the largest of which is $290$.

Interest in the representation of integers by quadratic forms
dates back at least to Fermat,
who studied in the 17th century the sum of two squares.
Another well-known example is the sum of four squares, which
represents all positive integers, as was proved by Lagrange in 1770.
Nowadays, we say that a quadratic form with this property is \emph{universal},
so that the 290-theorem characterizes universal quadratic forms.

After Lagrange, several people contributed to the theory of universal quadratic forms.
In 1916, Ramanujan gave a list of diagonal quaternary forms stating they
were universal, a claim later confirmed by Dickson~\cite{Dickson} in 1927
(who corrected a mistake in Ramanujan's list and coined the term `universal').
Willerding~\cite{Willerding} extended the list to \emph{classical} quaternary forms,
namely those with even cross-terms;
and at the end of the 20th century, a complete characterization 
of universal classical quadratic forms was given by Bhargava \cite{Bhargava},
who reproved and greatly simplified an unpublished result by Conway and Schneeberger,
the so-called 15-theorem.

A parallel story developed for quadratic forms representing all positive odd integers
rather than all natural numbers \cite{Jagy,Kaplansky,Oh2,Rouse}, with some surprises.
Going back to Ramanujan, he considered this problem and remarked that the behaviour
of some forms appeared to be unpredictable when the local-global principle fails:
as an example, he wrote \cite[p.14]{Ramanujan} that the odd integers
not represented by $x^2+y^2+10z^2$ \emph{``do not seem to obey any simple law''}.
His example was shown to miss exactly eighteen odd integers by Ono and Soundararajan
in 1997, under the assumption of the Generalized Riemann Hypothesis \cite{OS},
using advanced tools from the theory of modular forms and a numerical computation
to verify the representation of integers up to $10^{10}$.

A complete characterization of quadratic forms representing all positive odd integers
was eventually achieved by Rouse in 2014~\cite{Rouse}, who proved a `451-theorem'
(and there is also an unpublished 33-theorem by Bhargava for classical forms
representing odd integers). A few years later, DeBenedetto and Rouse~\cite{DeBenedetto}
considered quadratic forms representing integers coprime~to~$3$.

In analogy with Bhargava and Hanke's case of
universal forms, we say that a quadratic form
is \emph{coprime-universal with respect to $p$}
(or simply coprime-universal if the prime is understood)
if it represents all positive integers coprime to $p$.
Using this terminology, the works by Rouse and
DeBenedetto--Rouse settle, in two special cases,
the following problem.

\begin{problem}\label{intro:problem}
Given a positive integer $p$, determine the smallest set
$S_p$ with the property that a positive definite integral
quadratic form is coprime-universal with respect to~$p$
if and only if it represents~$S_p$.
\end{problem}

When $p=1$, i.e.~no coprimality is assumed,
$S_1$ is a subset of $\{1,\dots,290\}$ by the $290$-theorem.
When $p=2$, Rouse showed \cite[Theorem~2]{Rouse}
that $S_2$ is a subset of $\{1,3,5,\dots,451\}$,
consisting of $46$ elements.
When $p=3$, we have $S_3\subseteq\{1,\dots,290\}$
and $|S_3|=31$ by \cite[Theorem~1]{DeBenedetto}.
The existence of a set as in Problem \ref{intro:problem},
not completely evident, was proved by Bhargava:
he showed that for any infinite set of positive integers $S$,
there is a finite subset $S_0$ such that integral quadratic forms
represent every integer in $S$ if and only if they represent $S_0$.
For a proof of this (in a more general context), we refer to \cite[Theorem 3.3]{KKO}.
Recently, a version of Bhargava's result over number fields was given in~\cite{C-O}.

In this paper we solve Problem \ref{intro:problem} for every prime $p$,
providing explicit sets $S_p$ in all cases. The feasibility of this follows from
the observation that for $p$ large enough the sets $S_p$ stabilize, hence
leaving us with a finite list of primes to examine.
Similarly to \cite[Theorem~2]{Rouse}, our main theorem is conditional
to the assumption that certain ternary forms are coprime-universal,
which we leave as an open problem and is likely very hard.

\begin{conjecture}\label{intro:conjecture}
The ternary forms listed in Table \ref{table:ternaryforms}
are coprime-universal for the stated primes.
\end{conjecture}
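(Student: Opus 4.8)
The plan is to follow the analytic strategy of Ono and Soundararajan \cite{OS}, realizing each ternary form $Q$ in Table~\ref{table:ternaryforms} through its theta series and isolating an arithmetically meaningful main term from a genuinely fluctuating error term. Concretely, for a fixed form $Q$ and its attached prime $p$, I would view the theta series
\[
\theta_Q(z) = \sum_{n \ge 0} r_Q(n)\, q^n, \qquad q = e^{2\pi i z},
\]
as an element of the space of weight-$3/2$ modular forms on $\Gamma_0(4N)$ with the appropriate quadratic character, and decompose it orthogonally as $\theta_Q = E_Q + g_Q$, where $E_Q$ is an Eisenstein series and $g_Q$ is a cusp form. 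By Siegel's theorem the Fourier coefficient $a_E(n)$ of $E_Q$ equals the genus average of $r_Q(n)$ and is given by a product of local densities; since $Q$ is in general not alone in its genus, the gap between $r_Q(n)$ and this average is exactly $a_g(n)$, which is what must be controlled.

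First I would establish the main-term lower bound: for every $n$ coprime to $p$ that is not locally obstructed, $a_E(n) \gg_\varepsilon n^{1/2-\varepsilon}$. This requires checking that the genus of $Q$ represents every such $n$, i.e.\ a finite local computation at the primes dividing the level, and then bounding the local densities from below by a power saving, using GRH for the Dirichlet $L$-functions $L(1,\chi_{-n})$ that appear to make this bound effective. The analytic heart is the corresponding upper bound for the cuspidal coefficients. Decomposing $g_Q$ into Hecke eigenforms and applying the Shimura correspondence, each eigencomponent lifts to a weight-$2$ newform $f$, and Waldspurger's formula expresses the squarefree coefficients as
\[
|a_g(t)|^2 = c_f\, L\!\left(\tfrac12, f \otimes \chi_t\right) t^{1/2}
\]
up to explicit local factors, where $\chi_t$ is the quadratic character attached to $t$. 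Assuming GRH for these twisted modular $L$-functions then breaks the convexity barrier to give $L(\tfrac12, f \otimes \chi_t) \ll_\varepsilon t^{\varepsilon}$, hence $|a_g(t)| \ll_\varepsilon t^{1/4+\varepsilon}$ with an \emph{effective} constant. Comparing the two estimates, $r_Q(t) = a_E(t) + a_g(t) \ge c\, t^{1/2-\varepsilon} - C\, t^{1/4+\varepsilon} > 0$ for all squarefree $t$ coprime to $p$ beyond an explicit threshold $N_0$.

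It remains to pass from squarefree $t$ to all $n$ coprime to $p$, and to dispatch the range $n \le N_0$. Writing $n = t m^2$, the imprimitive values are handled by the elementary descent arguments of Kaplansky \cite{Kaplansky} and the techniques of Bhargava, which reduce representation in a full square class to finitely many base cases and exploit relations between $r_Q(tm^2)$ and $r_Q(t)$; this is also where one must verify that no square class coprime to $p$ is lost to a spinor obstruction. A direct machine computation then confirms that every $n \le N_0$ coprime to $p$ is represented. The main obstacle is twofold. On the analytic side, one must make $N_0$ both effective and small enough for the terminal computation to be feasible, which forces careful tracking of every constant through the Waldspurger and subconvexity steps. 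On the arithmetic side, a ternary form can genuinely fail to represent an entire square class that its genus represents, so the elementary reduction to squarefree values is fragile and must be carried out form by form. This fragility, together with the unavailability of the required $L$-value bounds unconditionally, is precisely why the statement is only conditional on GRH and why an unconditional proof appears out of reach.
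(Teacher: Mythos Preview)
Your proposal is essentially the paper's proof of Theorem~\ref{thm:GRH} (the conjecture is established only conditionally on GRH there as well). The decomposition $\theta_Q = E + C$, the lower bound on $a_E(n)$ via $L(1,\chi)$, the Shimura lift and Waldspurger's formula to express $|a_C(n)|^2$ as a central value $L(\tfrac12,F\otimes\chi)$, the GRH-conditional explicit bounds (the paper uses Chandee's), and the terminal finite computer check are exactly the steps carried out in Section~\ref{sec:GRH}.

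One clarification on the role of the elementary techniques: the passage from squarefree $t$ to arbitrary $n = tm^2$ coprime to $p$ is trivial, since $Q(x,y,z)=t$ gives $Q(mx,my,mz)=tm^2$, so no Kaplansky-style descent is needed there and no spinor issue arises at that step. In the paper, Kaplansky's auxiliary-form trick and the method of embedded genera serve a different purpose (Theorem~\ref{theo:partial1}): they handle certain congruence classes of squarefree $n$ \emph{unconditionally}, which both cuts down the set of local squareclasses that must be treated analytically and arranges that the character $\chi_{-D'n}$ is primitive in the remaining cases, simplifying the explicit constants in~\eqref{1905:eq001}.
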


\begin{table}[!ht]
\small
\renewcommand\arraystretch{1.25}
\begin{tabular}{r|>{$}r<{$}|>{$}l<{$}}
\hline
\phantom{xx}$p$ & \text{Conjectured Universal Forms} & \text{Label} \\
\hline
\multirow{3}{*}{2} & x^2 + 2y^2 + 5z^2 + xz          & Q_{\twoa} \\
                   & x^2 + 3y^2 + 6z^2 + xy + 2yz    & Q_{\twob} \\
                   & x^2 + 3y^2 + 7z^2 + xy + xz     & Q_{\twoc} \\
\hline
\multirow{2}{*}{5} & x^2 + xz + 2y^2 + 3yz + 7z^2    & Q_{\fivea} \\
                   & x^2 + 2y^2 + yz + 7z^2          & Q_{\fiveb} \\
\hline
                23 & x^2 + 2xz + 2y^2 + 3yz + 5z^2   & Q_{23} \\
\hline
                29 & x^2 + xz + 2y^2 + 3yz + 5z^2    & Q_{29} \\
\hline
                31 & x^2 + 2xz + 2y^2 + yz + 5z^2    & Q_{31} \\
\hline
\end{tabular}
\captionsetup{size=small,width=76mm}
\caption{Ternary forms conjectured to be relatively universal for the primes $p=2,5,23,29,31$.}\label{table:ternaryforms}
\end{table}

Understanding what integers are represented by ternary forms is
generally a difficult task. The representation numbers $r_Q(n)$,
which count the number of ways of representing
a given integer $n$ by a quadratic form $Q$,
are linked to class numbers of imaginary quadratic fields
and to the value in $s=1$ of quadratic Dirichlet $L$-functions. In turn,
these objects are closely related to the existence of Siegel zeros,
which is a major open problem in number theory.
A different approach goes by exploiting that
the representation numbers can be viewed as Fourier coefficients
of modular forms of weight $3/2$. The theory of modular forms is well understood
when the weight is integral, but less so when the weight is half-integral.
Assuming Conjecture~\ref{intro:conjecture}, our main result is as follows.

\begin{theorem}\label{theo:MAIN}
Let $p$ be a prime. If $p=2,5,23,29$ or $31$, assume Conjecture~\ref{intro:conjecture}.
A positive-definite integral quadratic form is coprime-universal with respect to $p$
if and only if it represents the set $S_p$ appearing in Table~\ref{table:Sp}.
\end{theorem}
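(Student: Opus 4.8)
The plan is to prove the two implications separately. One direction is immediate: since every element of $S_p$ is by construction a positive integer coprime to $p$, any coprime-universal form trivially represents $S_p$. All the content lies in the converse, which I would obtain through the \emph{escalation} machinery of Bhargava--Hanke, adapted to the coprimality constraint. Define the \emph{truant} of a positive-definite lattice $L$ to be the least positive integer coprime to $p$ that $L$ fails to represent, and build a rooted tree: the root is the zero lattice, and the children of a node $L$ are, up to isometry, all lattices $L\oplus\ZZ v$ obtained by adjoining a vector $v$ whose norm equals the truant of $L$. Bounding the cross terms $\langle v,e_i\rangle$ by Cauchy--Schwarz shows each node has only finitely many children. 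A node is declared a leaf once it is coprime-universal, and $S_p$ is taken to be the (finite) set of truants occurring in the tree.

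With $S_p$ so defined, I would show that any form $Q$ representing $S_p$ contains an isometric copy of some leaf, and is therefore coprime-universal. The truant of the root is $1$, so $Q$ represents $1$ and contains a norm-$1$ vector, giving a one-dimensional escalator sublattice. Inductively, given an escalator sublattice $L\subseteq Q$ that is not yet coprime-universal, its truant $t$ lies in $S_p$ and so is represented by $Q$; a vector of $Q$ realizing $t$ together with $L$ spans, after reduction of the cross terms, one of the finitely many children of $L$, again realized inside $Q$. Since the tree has bounded depth, this walk terminates at a coprime-universal leaf sitting inside $Q$, proving $Q$ coprime-universal.

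Two points then require genuine work. To treat all primes simultaneously I would exploit stabilization: once $p$ exceeds every escalation truant, deleting multiples of $p$ affects no integer up to the largest truant, and no ternary can be coprime-universal for such $p$ (its omitted values lie in fixed square classes, not all divisible by a single large $p$), so the tree coincides with that of the unconstrained $290$-problem. This leaves only finitely many small primes, whose trees and sets $S_p$ I would enumerate explicitly by computer. Second, each leaf must be certified coprime-universal. For leaves of dimension at least four the theta series has weight at least two: I would split it into an Eisenstein part and a cusp part, bound the Eisenstein coefficients from below via Siegel--Weil and the local densities (uniformly positive on integers coprime to $p$), bound the cuspidal coefficients from above by Deligne's estimate in the quaternary (weight-two) case, conclude that all sufficiently large coprime integers are represented, and verify the finitely many small ones directly.

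The main obstacle is the ternary leaves, which is precisely why Conjecture~\ref{intro:conjecture} is isolated. In weight $3/2$ the Eisenstein lower bound degenerates: the representation numbers are governed by central values of quadratic $L$-functions and by class numbers, so a Siegel zero could in principle yield sporadic non-represented integers, and the Deligne-type cuspidal bound is unavailable in half-integral weight. I would therefore not attempt an unconditional argument for these finitely many forms but collect them into Table~\ref{table:ternaryforms} and assume their coprime-universality, exactly as Rouse does for $p=2$; conditionally on this, the leaf analysis is complete and the characterization by $S_p$ follows. I expect the later sections to recover these ternaries under GRH by passing through the Shimura lift and Waldspurger's formula in the spirit of Ono--Soundararajan, supplemented by a finite computation.
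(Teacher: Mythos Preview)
Your overall plan---escalate from the zero lattice, collect the truants into $S_p$, certify the leaves---is exactly the paper's, and your reduction for large $p$ is sound (though the paper does it more cheaply: for $p\ge 41$ no multiple of $p$ lies in Bhargava--Hanke's $S_1$, so any form representing all integers coprime to $p$ represents $S_1$ and is universal by the $290$-theorem, with no need to rebuild the tree).

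There is, however, a genuine gap at the quaternary stage. You assert that the Eisenstein coefficients are ``uniformly positive on integers coprime to $p$'', but this fails: for every $p\le 37$ there are quaternary escalators with \emph{local} obstructions at a prime dividing the discriminant (twenty-two such forms when $p=5$, five when $7\le p\le 37$; see~\eqref{2306:eq001} and~\eqref{2808:eq001}). For these forms the local density $\beta_q(n)$ vanishes on an entire progression of integers coprime to $p$, so $a_E(n)=0$ there and your Eisenstein--Deligne inequality gives nothing. These forms are not leaves, so you would have to escalate them to rank five; but then the weight is $5/2$, half-integral again, and the Deligne bound you invoke (``in the quaternary (weight-two) case'') is unavailable. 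The paper sidesteps this with the \emph{switch} (\S\ref{S3.1}): each critical quaternary arises from a ternary $T$ escalated by its truant $t$, and all share a later common exception $t'$; since escalating by $t$ then $t'$ is the same as $t'$ then $t$, one replaces the critical quaternaries by the \emph{auxiliary} quaternaries obtained from $T$ by $t'$, and these turn out to have no local obstructions, so the weight-two analysis goes through.

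A second, more practical point: you propose to certify every quaternary leaf by the Eisenstein--Deligne method alone. The paper finds this computationally infeasible (cusp spaces of dimension up to $1136$) and instead uses a cascade of five methods (\S\ref{S4.1}--\ref{S5.2}): a $290$-check that already disposes of over $90\%$ of the forms for $p\ge 11$; detection of a coprime-universal ternary sublattice; detection of a \emph{regular} ternary sublattice $T$ whose congruence obstructions are then shown to be covered by $T(x,y,z)+dw^2$ for a suitable orthogonal $w$; and only then the analytic Methods~3 and~4 on the residue. Even after all this, a handful of quaternaries have squarefree exceptions outside $S_p$ (e.g.~$122$ for Form~7555 when $p=5$), and one must escalate those once more and verify that every rank-five child represents the stray exceptions---otherwise your $S_p$ would not be the set in Table~\ref{table:Sp}.
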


This result, aside completing the coprime-universal classification,
gives partial but interesting information about the general situation
with coprimality condition imposed by a composite modulus rather than a prime,
see \eqref{intro:eq:int} for a more precise statement. 
As we mentioned earlier, the cases $p=2,3$ present in
Theorem~\ref{theo:MAIN} were proved in~\cite{DeBenedetto,Rouse}.
We include them in Theorem~\ref{theo:MAIN}
and in the rest of this introduction for the sake of completeness.
A quick inspection of Table~\ref{table:Sp}
shows that the most interesting primes are $p=2,3,5,7$.
In fact, when $p\geq 41$, $S_p$ equals Bhargava and Hanke's set $S_1$,
corresponding to no coprimality assumed.
It is clear why this happens: since no multiple of $41$ belongs to $S_1$,
any quadratic form representing all integers coprime to $41$
must represent all the elements in $S_1$ and therefore must be universal
by the $290$-theorem. The same argument holds for larger primes
and gives Theorem~\ref{theo:MAIN} for $p\geq 41$.
When $2\leq p\leq 37$, the set $S_p$ is obtained from $S_1$ by
adding a handful of new elements and removing all elements divisible by $p$.
If we look at the size of the new exceptions and at the cardinality of $S_p$,
we find that no new element is larger than $451$ and that $|S_p|\leq |S_2|$
for all $p$, which shows that $p=2$ is extremal in both aspects.

\begin{table}[hb]
\small
\renewcommand\arraystretch{1.25}
\begin{tabular}{r|>{$}c<{$}|c}
\hline
\phantom{xx}$p$ & S_p & \\
\hline
\multirow{2}{*}{1} & 1,2,3,5,6,7,10,13,14,15,17,19,21,22,23,26,\phantom{!!!}  & \\
                   & \phantom{!!!} 29,30,31,34,35,37,42,58,93,110,145,203,290 & \\
\hline
\multirow{3}{*}{2} & \phantom{xx}S_1\cup\{11,33,39,41,47,51,53,57,59,77,83,\phantom{xxxxxxxxx} & \\
                   & \phantom{xxxxxx}\!85,87,89,91,105,119,123,133,137,143,\phantom{xxxxx}     & \\
                   & \phantom{xxxxxxxxx}187,195,205,209,231,319,385,451\}-2\ZZ                 & \\
\hline
\multirow{1}{*}{3} & S_1\cup\{11,38,46,47,55,62,70,94,119\} -3\ZZ & \\
\hline
\multirow{1}{*}{5} & S_1\cup\{38,39,46,47,53,61,62,74,78\} -5\ZZ & \\
\hline
\multirow{1}{*}{7} & S_1\cup\{39,46,47,55,62,78,142\} -7\ZZ & \\
\hline
$\geq 11$ & S_1 - p\ZZ &\\
\hline
\end{tabular}
\captionsetup{size=small,width=90mm}
\caption{Sets $S_1$ and $S_p$ for each prime.}\label{table:Sp}
\end{table}

Loosely speaking, the proof of Theorem \ref{theo:MAIN}
starts by constructing a finite list of ternary and quaternary forms,
called escalators (see Section~\ref{sec:coprime}),
which are likely to be coprime-universal.
Then, we show that any coprime-universal quadratic form
must represent one of the escalators in our list.
In light of Conjecture~\ref{intro:conjecture},
it is interesting to understand what happens with ternary forms.
Regarding this, we have a complete classification.

\begin{theorem}
Let $p$ be a prime.
There exist no positive-definite integral ternary quadratic forms
that are coprime-universal when $p=19$ or $p\geq 37$.
Instead, they exist (unconditionally) when $p=2,3,5,7,11,13,17$,
and conditionally on Conjecture~\ref{intro:conjecture} when $p=23,29,31$.
\end{theorem}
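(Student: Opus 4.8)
The plan is to separate the existence and the non-existence assertions, and in both directions to exploit a local reduction that constrains the shape of any coprime-universal ternary form. The key preliminary observation is that if a positive-definite ternary form $Q$ is coprime-universal for an \emph{odd} prime $p$, then $Q$ must be isotropic over $\QQ_\ell$ for every prime $\ell\neq p$ and must represent every $p$-adic unit over $\ZZ_p$. The first condition holds because at an anisotropic prime $\ell\neq p$ the form omits an entire $\ell$-adic square class, which contains integers coprime to $p$; the second holds because every integer coprime to $p$ is a unit in $\ZZ_p$. Since a positive-definite ternary form is anisotropic at an even number of places, one of which is $\infty$, these conditions force $Q$ to be anisotropic \emph{exactly} at $\{\infty,p\}$, with $\det Q$ divisible by $p$ but not by $p^2$ and with the unit Jordan block at $p$ equal to the anisotropic binary form over $\ZZ_p$. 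This pins $Q$ down to finitely many genera, each with finitely many classes, so that for every fixed $p$ the problem becomes a finite one; equivalently, $Q$ must be, or be contained in, one of the ternary escalators produced in Section~\ref{sec:coprime}.

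For non-existence, the primes $p\geq 41$ are immediate: by Table~\ref{table:Sp} we have $S_p=S_1$, so any coprime-universal form represents $S_1$ and is therefore universal by the $290$-theorem~\cite{BH}; but no ternary form is universal. The genuinely new cases are $p=19$ and $p=37$, where $S_p=S_1\setminus\{p\}$ is a proper subset of $S_1$ and universality is not forced. Here I would enumerate the finitely many ternary escalators compatible with the local profile above and, for each candidate $Q$, exhibit an integer $n$ coprime to $p$ and lying in $S_p$ that $Q$ provably fails to represent. Each such $n$ is excluded either by a local obstruction, i.e.\ $n$ is not represented by $Q$ over some $\ZZ_\ell$, or, when $n$ is everywhere locally represented, by a spinor-genus obstruction via the effective theory of spinor exceptional square classes for ternary forms; in either case the non-representation of the specific value $n$ is a finite, decidable certificate, since $Q$ is positive definite. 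The point is that these are \emph{unconditional} finite verifications, in sharp contrast with the existence direction.

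For existence I would simply display, for each admissible prime, a ternary form together with a certificate that it represents all integers coprime to $p$. For $p=2$ and $p=3$ this is classical: the regular forms $x^2+y^2+2z^2$ and $x^2+y^2+3z^2$ have exceptional sets $4^a(16b+14)$ and $9^a(9b+6)$, which consist only of multiples of $p$, so every integer coprime to $p$ is represented. For $p=5,7,13$, where the ternary genus anisotropic exactly at $\{\infty,p\}$ has a single class, the associated form is again regular, and coprime-universality reduces to the purely local check that every integer coprime to $p$ is represented everywhere locally. For the remaining unconditional primes $p=11,17$, where the genus has two classes, I would instead certify a single form by ruling out spinor exceptions coprime to $p$ using the elementary descent arguments of Kaplansky~\cite{Kaplansky} and Bhargava. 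Finally, for $p=23,29,31$ no form is provably coprime-universal by these elementary means; the only candidates are $Q_{23},Q_{29},Q_{31}$ of Table~\ref{table:ternaryforms}, so existence here is exactly the content of Conjecture~\ref{intro:conjecture} and is therefore stated conditionally.

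The main obstacle is the non-existence proof for $p=19$ and $p=37$, and in particular its contrast with the neighbouring existence cases. Since $S_{19}$ and $S_{23}$ differ only in whether $23$ or $19$ must be represented, and since $p=11,17,19$ all have class number two while $p=23,29,31,37$ all have class number three, neither the cardinality of $S_p$ nor the class number alone distinguishes success from failure. I therefore expect the delicate step to be the spinor-genus analysis: one must compute the number of spinor genera and the spinor exceptional square classes of the ternary genus attached to $p$, and show that for $p=19,37$ every candidate form has a \emph{provably} omitted value coprime to $p$, whereas for the neighbouring primes $p=11,17,23,29,31$ the only possible obstructions are either absent or conjectural. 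This rests on the explicit local data at the primes dividing the discriminant, combined with the effective results on representations by ternary forms.
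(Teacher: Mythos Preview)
Your broad strategy—enumerate candidates and either certify or rule them out—matches the paper, but several concrete steps are wrong, and the argument you flag as the ``main obstacle'' is in fact trivial.

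First, the local reduction in your opening paragraph is correct (anisotropic exactly at $\{\infty,p\}$, $p\Vert\det Q$), but it does \emph{not} pin $Q$ down to finitely many genera: the local structure at primes $\ell\neq p$ is unconstrained, so infinitely many discriminants are allowed. Finiteness comes only from escalation, so your ``equivalently'' is not an equivalence. You also have the containment backwards: a coprime-universal $Q$ \emph{contains} one of the ternary escalators as a sublattice, not the reverse.

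Second, your genus claims for $5\le p\le 17$ are wrong. For $p=5$ there are eight candidate forms (six provably, two conjecturally coprime-universal); for $p=7$ there are two, with different discriminants, so certainly not a single-class genus. More importantly, the paper's proof for $p=5,7,11,13,17$ is uniform and much simpler than what you propose: each form in Table~\ref{provablyternary} is checked against the Jagy--Kaplansky--Schiemann/Oh list of $899$ provably regular ternaries, and then one verifies it locally represents everything coprime to $p$. No Kaplansky descent is needed for $p=11,17$; those forms are already regular.

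Third, for non-existence at $p=19,37$, spinor-genus theory is overkill. Since the forms are positive definite, deciding whether a given small $n$ is represented is a finite enumeration of lattice vectors. The paper simply checks that each of the $35$ ternary escalators has a small exception coprime to $p$; this is a direct search taking seconds, not a structural argument. The ``delicate spinor-genus analysis'' you anticipate is absent from the actual proof.
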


A more precise version of the above theorem is given in Theorem \ref{S3:thm},
where we show that there is exactly one possible coprime-universal ternary
quadratic form when $p=11,13,17,23,29,31$, while there are
two forms when $p=7$ and a total of eight forms when $p=5$.
When $p=19$ or $p\geq 37$, the situation is similar to the case $p=1$,
since all ternary escalators have exceptions.

We also solve Problem \ref{intro:problem} for classical forms.
Since none of the forms in Table \ref{table:ternaryforms} is classical,
one may hope to have an unconditional result.
We show that this is indeed the case.

\begin{theorem}\label{theo:MAIN1}
Let $p$ be a prime. A positive-definite classical quadratic form
is coprime-universal with respect to $p$ if and only if it
represents the set $S_p'$ in Table~\ref{table:SpPrime}.
\end{theorem}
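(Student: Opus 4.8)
The plan is to mirror the escalator method used for Theorem~\ref{theo:MAIN}, but now restricted to the \emph{classical} forms (those with even cross-terms), and to exploit the crucial observation flagged in the text: none of the conjecturally-universal ternary forms in Table~\ref{table:ternaryforms} is classical. This is what will let me produce an \emph{unconditional} result. First I would build, for each prime $p$, the tree of classical escalators: starting from the empty form, I repeatedly add a new variable representing the smallest positive integer coprime to $p$ not yet represented (the \emph{truant}), subject to the classicality constraint that all cross-terms be even. Because the classicality condition forces the off-diagonal Gram entries to be integers rather than half-integers, the branching at each step is more restrictive than in the general case, so the escalator tree is a finite object that can be enumerated explicitly (by hand for small $p$, by computer in general). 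The truant of each escalator is recorded, and the union of all truants across the tree, together with the base elements, assembles a candidate set $S_p'$.

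Next I would prove the two directions of the "if and only if." The easy direction is necessity: any classical coprime-universal form must in particular represent every element of $S_p'$, since each element of $S_p'$ is a truant of some escalator and is coprime to $p$. For sufficiency I would argue by escalation: if a classical form $Q$ represents all of $S_p'$, then following the escalator construction $Q$ must contain (represent) some escalator lattice at each stage, and since $Q$ represents the truant at every node, the escalation never terminates in a form missing an element coprime to $p$; hence $Q$ represents every integer coprime to $p$. The key point making this unconditional is that, for classical forms, every terminal escalator that survives the branching is \emph{provably} coprime-universal by existing effective results on classical quaternary forms (as in the classical 15-theorem machinery and the tools of \cite{Bhargava,BH}), rather than relying on the three genuinely hard ternary forms of Conjecture~\ref{intro:conjecture}. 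Concretely, the ternary classical escalators either have known exceptions (and are escalated further to quaternaries) or are among the finitely many ternaries whose coprime-universality is already established unconditionally for the relevant small primes.

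For the stabilization over large $p$, I would reuse the argument given in the text for Theorem~\ref{theo:MAIN}: once $p$ exceeds the largest element of the unconstrained classical set $S_1'$ with no prime factor equal to $p$, the coprimality condition becomes vacuous on the relevant truants, so $S_p'$ collapses to $S_1' - p\ZZ$ and the classical $15$/$290$-type theorem applies directly. This reduces the whole problem to checking a finite list of small primes, exactly as in the main theorem.

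The main obstacle I anticipate is verifying coprime-universality of the surviving \emph{quaternary} classical escalators without invoking GRH or the hard ternary conjecture. For each such quaternary $Q$ one must show it represents every integer coprime to $p$; this requires controlling the Eisenstein part of the associated weight-$2$ theta series and bounding the cuspidal contribution, so that the local representability conditions (solvability over every $\ZZ_\ell$, including $\ell=p$) guarantee global representability with only finitely many possible exceptions, all of which are then checked directly. The saving grace in the classical setting is that the relevant lattices are $\ZZ$-even, their theta series have integral weight after a shift or lie in better-understood spaces, and the exceptional sets are small enough to be eliminated by explicit finite computation—so the analytic input needed is strictly weaker than the half-integral-weight Waldspurger/Shimura analysis forced upon us by the non-classical ternaries in Conjecture~\ref{intro:conjecture}. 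I would therefore expect the bulk of the work, and the only real risk to unconditionality, to reside in this finite but delicate case-by-case treatment of the quaternary classical escalators.
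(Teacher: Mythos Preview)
Your proposal is correct and follows essentially the same route as the paper: restrict the escalation to classical lattices, reduce to small primes via the $15$-theorem, and handle the resulting finite list of quaternary escalators with Methods~0--4. One clarification on the source of unconditionality: the theta series of \emph{any} positive quaternary form has integral weight~$2$, classical or not, so the analytic Methods~3 and~4 are applied here exactly as in Section~\ref{S5} and are no easier; the classical case is unconditional simply because the ternary forms of Table~\ref{table:ternaryforms} are not classical and therefore never occur in the classical escalator tree, so Conjecture~\ref{intro:conjecture} is never invoked (Method~1 uses only the provably regular forms of Table~\ref{provablyternary}).
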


\begin{table}[htb]
\small
\renewcommand\arraystretch{1.25}
\begin{tabular}{r|>{$}c<{$}|c}
\hline
\phantom{xx}$p$ &         S_p'                    &\\
\hline
  1     & 1, 2, 3, 5, 6, 7, 10, 14, 15                  &\\
\hline
  2     & 1, 3, 5, 7, 11, 15, 33                        &\\
\hline
  3     & 1, 2, 5, 7, 10, 11, 14, 19, 22, 31, 35        &\\
\hline
  5     & 1, 2, 3, 6, 7, 13, 14, 21, 26, 29, 58         &\\
\hline
  7     & 1, 2, 3, 5, 6, 10, 15, 23, 30, 31, 39, 55, 78 &\\
\hline
$\geq 11$ & S_1' &\\
\hline
\end{tabular}
\captionsetup{size=small,width=90mm}
\caption{Sets $S_1'$ and $S_p'$ for each prime.}\label{table:SpPrime}
\end{table}

Arguing as with Theorem \ref{theo:MAIN}, one shows that
the case $p\geq 11$ of Theorem \ref{theo:MAIN1}
follows from the fact that no multiple of $p$ belongs to
the set $S_1'$ in the 15-Theorem for classical forms
without coprimality assumption~\cite[Theorem~1]{Bhargava}.
On the other hand, unlike Theorem~\ref{theo:MAIN} where
the prime $p=2$ was extremal, we find here that the extremal case
for classical forms occurs when $p=7$,
both in terms of the largest exception and of the cardinality of $S_p'$.

The sets $S_p$ and $S_p'$ appearing in
Theorem~\ref{theo:MAIN} and Theorem~\ref{theo:MAIN1}
are minimal, as required by Problem \ref{intro:problem}.
In other words, no element $t\in S_p$ (resp. $t'\in S_p'$)
can be removed from the list, because
there are quadratic forms missing it.
The largest element $t_{\max}$ in each set is also a little special,
because not only there are quadratic forms missing it,
but in fact any quadratic form representing everything below $t_{\max}$
represents everything above $t_{\max}$.
We express these results in the following two corollaries.

\begin{corollary}\label{intro:corollary}
For each prime $p$, let $S_p$ (resp.~$S_p'$) be as in Table~\ref{table:Sp} (resp.~Table~\ref{table:SpPrime}).
For every $t\in S_p$ (resp.~$t'\in S_p'$) there is a positive-definite integral (resp.~classical)
quadratic form representing every integer coprime to $p$ except $t$ (resp.~$t'$).
\end{corollary}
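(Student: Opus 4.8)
The plan is to prove the statement by an explicit construction: for each prime $p$ and each $t\in S_p$ (respectively each $t'\in S_p'$) we produce a single positive-definite integral (respectively classical) quadratic form $F_{p,t}$ and verify that it represents every positive integer coprime to $p$ except $t$. This is precisely the assertion of the corollary; and since every element of $S_p$ is coprime to $p$, such a form also represents all of $S_p\setminus\{t\}$ while failing to be coprime-universal, which is what makes the sets $S_p$ minimal in the sense of Problem~\ref{intro:problem}. Collecting the chosen forms in a table, the verification of each entry splits into two independent claims: (a) $F_{p,t}$ does not represent $t$, and (b) $F_{p,t}$ represents every other positive integer coprime to $p$.

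Claim (a) is a finite check. Since $t$ is small and $F_{p,t}$ is positive-definite, any solution $F_{p,t}(x)=t$ with $x\in\ZZ^n$ forces $x$ into a bounded box determined by the successive minima of the form; one enumerates this box and confirms that $t$ is not attained. Where convenient we instead record a congruence obstruction certifying $t\notin F_{p,t}(\ZZ^n)$, but the enumeration always suffices.

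For claim (b) we invoke the theory of modular forms, which is the reason we are free to choose each $F_{p,t}$ of rank $n\geq 4$, even when the omitted element could in principle be witnessed by a ternary form. With $F_{p,t}$ of rank $n\geq 4$ its theta series is a modular form of integral weight $k=n/2\geq 2$ on a suitable congruence subgroup, and we write $\theta_{F_{p,t}}=E+g$ with $E$ in the Eisenstein subspace and $g$ cuspidal. The $m$-th coefficient of $E$ is a product of local representation densities and, for $m$ coprime to $p$, is bounded below by $c\,m^{k-1}$ with an explicit $c>0$ (the finitely many primes dividing the level of $F_{p,t}$ are treated separately, as they impose only the fixed coprimality to $p$ and no further obstruction, since $F_{p,t}$ omits only $t$), whereas Deligne's bound gives that the $m$-th coefficient of $g$ is $O_\varepsilon(m^{(k-1)/2+\varepsilon})$. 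Hence there is an explicit $N_{p,t}$ with $r_{F_{p,t}}(m)>0$ for all $m\geq N_{p,t}$ coprime to $p$, and a finite computation (as in \cite{BH,Rouse}) verifies representation of the remaining $m<N_{p,t}$ with $\gcd(m,p)=1$ and $m\neq t$.

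The main obstacle is making the comparison in (b) fully effective and keeping the finite range short enough to check: one must pin down the implied constant in Deligne's bound together with the lower bound $c$ on the Eisenstein coefficient, and confirm that the local density at each prime dividing the level is nonzero for every $m$ coprime to $p$. The margin is tightest for quaternary witnesses, where $k=2$ and the cuspidal error is only a square-root smaller than the main term; selecting witnesses of rank five when this margin is insufficient both simplifies the estimate and, crucially, keeps the entire argument within integral weight, so that the minimality statement does not depend on Conjecture~\ref{intro:conjecture}.
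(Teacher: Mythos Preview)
Your plan is viable in principle but takes a much heavier route than the paper, and it leaves the crucial step---actually exhibiting the witnesses $F_{p,t}$---undone. The paper's argument is almost entirely elementary and does not invoke the Eisenstein/cuspidal decomposition at all. For $t\in S_1$ (resp.~$t'\in S_1'$) it simply cites \cite[Theorem~2]{BH} (resp.~\cite[\S6(iii)]{Bhargava}); this already handles every prime $p\geq 11$. For $p=2,3$ it cites \cite{Rouse,DeBenedetto}. Only the finitely many $t\in (S_5\cup S_7)\setminus S_1$ are new, and for these the paper starts from any quaternary form $Q$ with \emph{truant} $t$---that is, $Q$ represents every positive integer coprime to $p$ below $t$ but misses $t$---taken from the explicit list in Table~\ref{table:forms4truants}, and then passes to the orthogonal sum
\[
Q(x)\;+\;(t+1)(a^2+b^2+c^2+d^2)\;+\;\sum_{i=1}^{t-1}(t+1+i)\,x_i^2.
\]
By Lagrange's four-square theorem the added block represents every integer $\geq t+1$, while all added diagonal entries exceed $t$, so the new form still misses $t$; and $Q$ covers everything below $t$ coprime to $p$. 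No effective Deligne bound, no local-density computation, no finite check up to an analytic threshold $N_{p,t}$ is needed. The price is that the witness has rank roughly $t+3$ rather than $4$ or $5$. Your approach, if carried out, would yield low-rank witnesses---a genuine gain---but would require, for each $t$, exhibiting a specific form that misses \emph{only} $t$ (a stronger condition than having truant $t$) and rerunning the full analytic verification of Section~\ref{S5}; as written, the proposal asserts these forms exist without producing them.
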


\begin{corollary}\label{intro:cor2}
Let $p$ be a prime and let $Q$ be a positive-definite integral quadratic~form.

$i)$ Assume that $Q$ represents every positive integer coprime to $p$ smaller than $t_{\max}=\max(S_p)$.
When $p\in\{2,5,23,29,31\}$, assume Conjecture~\ref{intro:conjecture}.
Then $Q$ represents every integer coprime to $p$ greater than $t_{\max}$.

$ii)$ Assume that $Q$ is classical and represents every positive integer coprime to $p$ smaller than $t_{\max}'=\max(S_p')$.
Then, unconditionally for all primes,
$Q$ represents every integer coprime to $p$ greater than $t_{\max}'$.
\end{corollary}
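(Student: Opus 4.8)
The plan is to deduce both parts from the escalator construction of Section~\ref{sec:coprime} together with Theorem~\ref{theo:MAIN} (resp.\ Theorem~\ref{theo:MAIN1}), the one additional ingredient being a structural fact about the escalators whose truant equals the maximal value $t_{\max}=\max(S_p)$ (resp.\ $t_{\max}'=\max(S_p')$). Recall that $S_p$ is exactly the set of truants occurring in the finite escalation tree, so every truant is $\le t_{\max}$, and $t_{\max}$ is the truant of only finitely many explicit escalators. The key claim I would isolate is that each escalator $E$ whose truant equals $t_{\max}$ misses, among the integers coprime to $p$, only $t_{\max}$ itself; equivalently, the exceptional set of $E$ restricted to integers coprime to $p$ is $\{t_{\max}\}$. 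This is precisely the information produced when proving the main theorems, where the exceptional sets of the critical escalators are pinned down, so for the present corollary I would record it as the single external input.

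Granting this, the argument for part $i)$ runs as follows. Let $Q$ represent every positive integer coprime to $p$ below $t_{\max}$. Since $Q$ represents $1$ it contains the unary escalator $\langle 1\rangle$, and following the escalation procedure—using at each stage that $Q$ represents the current truant whenever that truant is $<t_{\max}$—I would build a chain of sub-forms $\langle 1\rangle\subset\cdots\subset E\subseteq Q$ terminating at an escalator $E$ whose truant is $\ge t_{\max}$ or which is already coprime-universal. Because $t_{\max}$ is the largest truant in the tree, the only possibilities are that $E$ is coprime-universal or that its truant is exactly $t_{\max}$. In the first case $Q$ is coprime-universal and there is nothing more to prove; in the second, the isolated claim shows that $E$ represents every integer coprime to $p$ except $t_{\max}$, and since $Q$ represents everything $E$ does, $Q$ represents every integer coprime to $p$ above $t_{\max}$. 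The chain may terminate at one of the conjectured ternary forms of Table~\ref{table:ternaryforms}, whose coprime-universality is only known under Conjecture~\ref{intro:conjecture} for $p\in\{2,5,23,29,31\}$; this branch is the sole source of the conditional hypothesis in part $i)$.

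For part $ii)$ I would run the same construction, but now $Q$ is classical, so every sub-form in the chain is classical as well. In particular none of the forms of Table~\ref{table:ternaryforms} can appear, as they are all non-classical, so the terminal escalator $E$ is either coprime-universal or a classical quaternary escalator with truant $t_{\max}'$. The exceptional sets of these quaternary escalators are computed unconditionally in the proof of Theorem~\ref{theo:MAIN1}, which eliminates any appeal to Conjecture~\ref{intro:conjecture} and yields the statement for all primes.

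The hard part will be the isolated claim that the maximal-truant escalators miss only $t_{\max}$ among the integers coprime to $p$, since it rests on controlling the exceptional set of a quadratic form above a threshold, i.e.\ bounding the cuspidal part of its representation numbers $r_E(n)$ against the Eisenstein main term. For the classical quaternary escalators relevant to part $ii)$ these coefficients live in weight $2$, where the effective estimates underlying the $290$-type results give square-root cancellation and the conclusion is unconditional; for the integral case the delicate inputs are the weight-$3/2$ coefficients attached to the conjectured ternary forms, and it is exactly there that Conjecture~\ref{intro:conjecture} enters.
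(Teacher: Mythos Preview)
Your approach is correct and matches the paper's proof in Section~\ref{S81}. The paper argues exactly as you do: $Q$ contains an escalator from the tree built in Section~\ref{sec:coprime}; if that escalator is coprime-universal we are done, and otherwise one identifies the escalators missing $t_{\max}$ and checks that, among integers coprime to $p$, they miss only $t_{\max}$ (possibly after one further escalation inside $Q$). The only difference is presentational: where you isolate an abstract claim about ``escalators with truant $=t_{\max}$'', the paper proceeds concretely, naming the relevant forms case by case (the form equivalent to $L_{145}$ for most primes, a specific form for $p=29$, and two or three classical quaternary escalators for $p=5,7$) and reading off their exception sets from the analysis already carried out in Sections~\ref{S6}--\ref{sec:classical}. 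Your remark that the conditional hypothesis enters only through the non-classical ternary forms of Table~\ref{table:ternaryforms}, and hence disappears in the classical case, is exactly the paper's reasoning.
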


In Table~\ref{table:forms4truants} in the Appendix we provide a list of quadratic forms
representing everything below~$t$ for each element $t\in S_p$, for the various
primes $p$. This is used in Section~\ref{S8} to produce quadratic forms with
exactly one exception as in Corollary \ref{intro:corollary}.
The proof of Corollary \ref{intro:cor2} is obtained by inspecting carefully
the list of escalators used to prove Theorem~\ref{theo:MAIN}.

To analyze quaternary escalators and verify whether they are coprime-universal,
we use a combination of techniques from \cite{BH,DeBenedetto,Rouse}, adapted to suit our needs.
The first two of these (see~\S\ref{S4.2}--\S\ref{S4.3})
require checking if a quaternary form $Q$ represents
a coprime-universal ternary form (when they exist), in which case we are done with~$Q$;
or, in alternative, if a regular ternary form, say $T$, is represented.
When this happens, we look at the local obstructions of $T$
and show that an orthogonal complement of $T$ in $Q$ fixes these obstructions,
so that $Q$ misses a finite explicit list of integers.

The third and fourth methods (Section~\ref{S5}) have a more analytic flavour
and use modular forms, starting from the observation that
the theta series $\theta_Q$ associated to $Q$ is a modular form of weight 2.
As such, it decomposes into an Eisenstein part $E$ and a cuspidal part $C$:
\begin{equation}\label{intro:eq001}
\theta_Q(z) = \sum_{n\geq 0} r_Q(n)q^n = \sum_{n\geq 0} a_E(n)q^n + \sum_{n\geq 1}a_C(n)q^n\qquad q=e^{2\pi iz}.
\end{equation}
The Eisenstein coefficient $a_E(n)$ grows linearly in $n$ and so we have $a_E(n)>C_1n$;
on the other hand, by Deligne's theorem, we have $|a_C(n)|\leq C_2d(n)\sqrt{n}$.
Assuming $r_Q(n)=0$ leads to a contradiction for $n$ sufficiently large,
which leaves us with checking the representation of a finite list of integers.
The constant $C_1$ is computed exactly,
while the constant $C_2$ is either computed exactly (method 4, see \S\ref{S5.2})
by doing extensive linear algebra on modular form spaces
(possibly of very large dimension), or is approximated
(method~3, see \S\ref{S5.1}) by using explicit bounds proved by Rouse \cite{Rouse}.

We also include an initial test with the 290-theorem, which we call `Method zero'.
This shows that in fact a good proportion of our quaternary escalators represents
every integer up to 290 and is therefore universal, see \S\ref{S4.1}.

The proof of Theorem~\ref{theo:MAIN} -- Corollary \ref{intro:cor2}
is then given in Sections \ref{S6}--\ref{S8}, where the key point is
that any coprime-universal quadratic form must represent one of the escalators, so that
the set $S_p$ is obtained by looking at all the exceptions found in the course
of the escalation process.

In Sections \ref{sec:elementary}--\ref{sec:GRH} we address Conjecture \ref{intro:conjecture}.
A standard method to deal with ternary quadratic forms consists in showing that they are regular,
which can be done by showing that they are unique in their genus, see e.g.~\cite{J-P}.
When this method fails, one can sometimes apply better tailored techniques, as was done by Kaplansky \cite[\S 4]{Kaplansky}
and Jagy \cite{Jagy} to prove that certain ternary forms represent all odd integers.
These techniques rely on the existence of auxiliary regular forms
from which we can go back to the original one by means of an elementary change of variable;
see Section \ref{sec:elementary} for more details.

A second method uses embedded genera: we show that a given quadratic form $Q$
contains the representatives of a whole genus. It follows that $Q$ represents
everything that is represented by such a genus, which can be easily checked by local conditions.
This method was already used by Bhargava \cite[p.~30]{Bhargava}
and typically does not resolve all the local conditions,
but is nevertheless useful for some of them.
With these arguments, we make partial progress towards Conjecture~\ref{intro:conjecture}.

\begin{theorem}\label{theo:partial1}
Consider the forms $Q_{\twoa},\dots,Q_{31}$ appearing in Table~\ref{table:ternaryforms}.
For given $p\in\{2,5,23,29,31\}$, let $N$ denote a squarefree positive integer
coprime with $p$. If $N$ satisfies any of the congruences listed
in Table~\ref{table:elementary}, then $N$ is represented by the form
indicated in the table.
\end{theorem}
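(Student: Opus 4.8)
The plan is to prove Theorem~\ref{theo:partial1} by combining the two elementary techniques described in Section~\ref{sec:elementary}: the Kaplansky--Jagy method of auxiliary regular forms accessed through an elementary change of variables, and Bhargava's method of embedded genera. Both reduce the representability of a squarefree $N$ by one of the forms $Q_{\twoa},\dots,Q_{31}$ to a purely local computation, which for squarefree arguments amounts to checking membership in a finite list of residue classes. I would argue form by form and, within each form, congruence class by congruence class, matching each entry of Table~\ref{table:elementary} to whichever of the two methods resolves it.

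For the change-of-variables method, I would, for each target form $Q$ and each relevant congruence class, exhibit an auxiliary ternary form $T$ that is regular -- either because it is alone in its genus, and hence regular (see~\cite{J-P}), or because it appears among the known regular ternary forms used by Kaplansky~\cite{Kaplansky} and Jagy~\cite{Jagy} -- together with an explicit linear substitution (typically fixing the residue of one variable and completing the square in the others) under which a representation of a related integer $M$ by $T$ yields a representation of $N$ by $Q$. Since $T$ is regular, $T$ represents $M$ as soon as $M$ is represented everywhere locally, and for squarefree $M$ this is governed by congruences modulo the level of $T$. Verifying that the classes recorded in Table~\ref{table:elementary} fall inside the locally represented ones then closes this part.

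For the embedded-genus method, following Bhargava~\cite{Bhargava}, I would show that $Q$ represents every class of a suitable ternary genus $\mathcal{G}$, by exhibiting inside $Q$ a sublattice isometric to each reduced representative of $\mathcal{G}$. Once this is established, $Q$ represents any integer that $\mathcal{G}$ represents, and by the local-global principle for representations by a genus, $\mathcal{G}$ represents exactly the integers represented $\ell$-adically for every prime $\ell$. Restricting to squarefree $N$ coprime to $p$, these local conditions become congruence conditions modulo the discriminant of $\mathcal{G}$, and the remaining step is the routine but lengthy determination of the locally represented residue classes at the bad primes, carried out one form at a time.

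The main obstacle is that neither elementary method resolves all residue classes: as already noted in the introduction, the embedded genus typically leaves several local conditions unhandled, and not every class can be reached by a single clean substitution. The delicate part is therefore to choose, for each of the seven forms, the right auxiliary regular forms and the right embedded genera so that between them they cover precisely the congruences in Table~\ref{table:elementary}. The residue classes that escape both methods are exactly those deferred to the analytic arguments of Section~\ref{sec:GRH}, so the present theorem is intentionally partial, and the bookkeeping of which method handles which class -- rather than any single hard estimate -- is where the real effort lies.
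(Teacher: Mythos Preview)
Your proposal is correct and matches the paper's approach exactly: the paper proves Theorem~\ref{theo:partial1} form by form, using Kaplansky's auxiliary-regular-form substitutions for $Q_{\twoa},Q_{\twob},Q_{\twoc}$, for the $3$-divisibility classes of $Q_{\fivea}$, and for the $11$-divisibility class of $Q_{\fiveb}$, and using embedded genera for the $\bmod\,4$ and $\bmod\,8$ classes of $Q_{\fivea},Q_{\fiveb},Q_{23},Q_{29},Q_{31}$. One small slip: there are eight forms in Table~\ref{table:ternaryforms}, not seven; and be aware that the change-of-variables step is not always a clean ``fix a residue and complete the square'' --- in several cases (e.g.\ $Q_{\fivea}$ with $N\equiv 2\pmod 3$) one must argue, using automorphs of the auxiliary form, that \emph{some} representation satisfies the needed divisibility, which is the non-obvious part.
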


\begin{table}[ht]
\small
\renewcommand\arraystretch{1.25}
\begin{tabular}{r|>{$}c<{$}|>{$}c<{$}}
\hline
$p$ & \text{\rm{Represented congruences}} & \text{\rm{Form}}\\
\hline
\multirow{3}{*}{$2$} & N\equiv 0\pmod{19} &   Q_{\twoa}\\
\hhline{~|--}
                     & N\equiv 0\pmod{31} &   Q_{\twob}\\
\hhline{~|--}
                     & N\equiv 0\pmod{37} &   Q_{\twoc}\\
\hline
\multirow{2}{*}{$5$} & N\equiv 0,2\pmod{3}\text{\rm{ or }} N\equiv 3\pmod{4} & Q_{\fivea}\\
\hhline{~|--}
                     & N\equiv 0\pmod{11} \text{\rm{ or }} N\equiv 1\pmod{4} & Q_{\fiveb}\\
\hline
                $23$ & N\equiv 1\pmod{4}  &   Q_{23}\\
\hline
                $29$ & N\equiv 3\pmod{8}  &   Q_{29}\\
\hline
                $31$ & N\equiv 1\pmod{4}  &   Q_{31}\\
\hline
\end{tabular}
\captionsetup{size=small,width=90mm}
\caption{If $N$ is a squarefree positive integer coprime to~$p$
and satisfies one of the stated congruences, then $N$ is represented
by the corresponding form on the right.}\label{table:elementary}
\end{table}

In fact, by the method of embedded genera we find more local conditions
than the ones listed in Table \ref{table:elementary};
we do not include them in Theorem \ref{theo:partial1} because they involve
primes not dividing the discriminant of our forms. As an example,
we find that $Q_{29}$ contains a genus of size 13 representing
all squarefree integers $n$ with $29\nmid n$ and $n\equiv 1\pmod{3}$.
For further details on these additional congruences, see Remark~\ref{sec:elementary:rmk}.

Theorem~\ref{theo:partial1} is new even when $p=2$: on~\cite[p.1741]{Rouse}
it is only mentioned that, for $Q_{\twoa}$, if $N=19N_0$ with $N_0$ not a quadratic residue modulo $19$,
then $N$ is represented. We prove that it suffices to have the divisibility by 19
to get to the same conclusion and we further extend the result to $Q_{\twob}$ and $Q_{\twoc}$.

In Section \ref{sec:GRH} we give a full resolution
of Conjecture \ref{intro:conjecture} under the assumption
of the Generalized Riemann Hypothesis.

\begin{theorem}\label{thm:GRH}
The Generalized Riemann Hypothesis implies Conjecture \ref{intro:conjecture}.
\end{theorem}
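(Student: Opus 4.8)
The plan is to follow the Ono--Soundararajan strategy \cite{OS}, carrying it out separately for each of the eight ternary forms in Table~\ref{table:ternaryforms}. Fix one such form $Q$ together with its prime $p$. Since coprime-universality concerns all $n$ coprime to $p$, I would first perform the purely local analysis, checking that $Q$ represents every residue class coprime to $p$ over $\ZZ_\ell$ for all primes $\ell$, so that no local obstruction forbids coprime-universality; one then reduces the global problem to squarefree $n$ lying in finitely many square classes. The descent to the squarefree case is standard for ternary forms but must account for spinor-exceptional square classes, where representability of $m$ does not follow automatically from that of its squarefree part. Here the elementary results collected in Theorem~\ref{theo:partial1}, together with the embedded-genera and Kaplansky-type arguments of Section~\ref{sec:elementary}, dispose of many classes unconditionally, leaving only a controlled remainder for the analytic treatment.

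The analytic core is the decomposition \eqref{intro:eq001} applied to the weight-$3/2$ theta series $\theta_Q$, giving $r_Q(n)=a_E(n)+a_C(n)$ with $a_E(n)$ the Eisenstein (genus) contribution and $a_C(n)$ the cuspidal one. The Eisenstein coefficient is computed exactly through Siegel's local density formula and, for squarefree $n$ in an eligible class, grows like $n^{1/2}$ times the value at $s=1$ of a quadratic Dirichlet $L$-function; GRH for Dirichlet $L$-functions then supplies the effective lower bound $L(1,\chi_n)\gg 1/\log\log n$ that rules out Siegel zeros, whence $a_E(n)\gg n^{1/2}/\log\log n$. For the cuspidal part I would identify the Shimura lift $g$ of the relevant weight-$3/2$ cusp form, a weight-$2$ newform of explicit level, and invoke Waldspurger's theorem to express $|a_C(n)|^2$ as an explicit constant times $n^{1/2}L(\tfrac12,g\otimes\chi_n)$, up to archimedean and bad-prime factors. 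GRH for the modular $L$-functions $L(s,g\otimes\chi_n)$ yields the Lindel\"of bound $L(\tfrac12,g\otimes\chi_n)\ll_\varepsilon n^{\varepsilon}$, hence $|a_C(n)|\ll_\varepsilon n^{1/4+\varepsilon}$. Comparing the two contributions gives $r_Q(n)=a_E(n)+a_C(n)>0$ for every eligible squarefree $n$ exceeding an effectively computable threshold $N_0$.

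The argument is completed by verifying directly that $Q$ represents all $n\le N_0$ coprime to $p$, using the square-class reduction to shrink the range and Theorem~\ref{theo:partial1} to clear the divisibility classes it covers. I expect the main difficulty to be effectivity rather than the shape of the estimates: to make $N_0$ small enough to verify by computation, every implied constant -- the Waldspurger proportionality factor, the local densities at the bad primes, the archimedean factor, and the $\log\log$ constant in the lower bound for $L(1,\chi_n)$ -- must be pinned down explicitly for each of the eight forms, and the spinor-exceptional classes must be isolated precisely so that the analytic estimate is applied only where it is valid. Subconvexity alone would already give the asymptotic $a_C(n)=o(a_E(n))$; it is the effectivity furnished by GRH, for both the Dirichlet and the modular $L$-functions, that converts this asymptotic into a finite, checkable list and thereby closes the gap.
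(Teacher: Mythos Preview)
Your proposal is correct and follows essentially the same Ono--Soundararajan strategy as the paper: decompose $\theta_Q$, bound the Eisenstein part below via $L(1,\chi)$ and the cuspidal part above via Shimura--Waldspurger and the central value $L(\tfrac12,F\otimes\chi)$, invoke GRH for explicit constants, and finish with a finite computer check aided by the unconditional progressions from Theorem~\ref{theo:partial1}. The only notable implementation detail you leave implicit is that the paper uses Chandee's explicit GRH bounds \cite{chandee} to produce a single inequality for the ratio $\sqrt{L(1/2,F\otimes\chi)}/L(1,\chi)$, yielding concrete thresholds (around $10^7$--$10^8$) rather than separate $1/\log\log n$ and $n^\varepsilon$ estimates.
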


Theorem \ref{thm:GRH} should be compared with \cite[Theorem 7]{Rouse},
where the case $p=2$ was discussed, and with \cite[Theorem 1.1]{LO}
about regular ternary forms. Also, to be precise,
we need GRH for specific quadratic Dirichlet $L$-functions
and modular $L$-functions of weight two.
The proof of Theorem \ref{thm:GRH} follows a path
pioneered by Ono--Soundararajan to study Ramanujan's ternary form \cite{OS}
and begins with a decomposition of theta series as in \eqref{intro:eq001}.
Since we work with ternary forms, the Eisenstein coefficient $a_E(n)$
equals roughly $\sqrt{n}L(1,\chi)$, where $\chi$ is a quadratic character of modulus
a multiple of $n$, while the cuspidal part is understood as the central value
of a weight-two modular $L$-function by means of the Shimura lift and Waldspurger's formula.
Explicit upper and lower bounds on the $L$-functions are obtained
under the assumption of GRH, leading to an inequality of the type
$a_E(n)>|a_C(n)|$ for $n$ sufficiently large, which leaves us with
a finite list of integers to check.
For more details, see \S\ref{sec:GRH.1}.

In the proof of Theorem~\ref{thm:GRH} we also take advantage of the partial results
provided by Theorem~\ref{theo:partial1}, since we can restrict in most cases to
primitive characters $\chi$, which simplifies the numerics,
see~\S\ref{sec:GRH.3} and~\eqref{1905:eq001}.

In Section \ref{sec:stat} we discuss possible extensions of our results to
quadratic forms representing an arithmetic progression rather than all the integers coprime to
a given prime. Although the overall strategy of proof should in principle go through,
we find that some parts of it could be computationally quite challenging.
To avoid doing an exhaustive list of escalators, in this case
we build escalators `at random' and try to determine what kind of
exceptions we get by repeating several of these tosses.
Our data is limited, so we do not claim any results or conjectures,
but we state an open question suggested by the numerics.
We direct the reader to Section \ref{sec:stat} for more details.

\begin{question}\label{intro:question}
Let $m$ be a positive integer and $0\leq k\leq m-1$.
Let $S_{mk}\subseteq m\NN+k$ be a minimal set with the property that
if a positive-definite integral quadratic form represents $S_{mk}$
then it represents all of $m\NN+k$.
Do we have $\max(S_{mk})=O(m)$?
\end{question}

Another natural generalization of our results would be to integers coprime
to a composite modulus $q$ rather than a prime. In order to have
$S_q\neq S_1$, there must be a prime $p$ dividing $q$ with $p\in\{2,\dots,37\}$.
Moreover, we have
\begin{equation}\label{intro:eq:int}
S_q \supseteq \bigcap_{p|q} S_p.
\end{equation}
Indeed, if $m\in S_p$ for all $p|q$, then we clearly have $(m,q)=1$.
Next, for any prime $p|q$, there is a form $Q$ representing all integers coprime to $p$ less than $m$.
But then $Q$ is an integral quadratic form representing all integers coprime to $q$ and missing $m$,
hence $m\in S_q$, proving \eqref{intro:eq:int}.
In general, we expect the converse of \eqref{intro:eq:int} to fail;
nevertheless, we can still use the inclusion to gain some insight on $S_q$,
as we may have a large intersection (take for example $q=17\times 23$, when the intersection
equals $S_1$ minus just two elements).

A directly related field of research concerns
universal quadratic forms over number fields.
For the precise definition and recent literature in this area,
we refer to \cite{kalasurvey,Kim}. Some remarkable phenomena can occur:
Maass~\cite{maass} showed that $x^2+y^2+z^2$
is universal over $\mathbb{Q}(\sqrt{5})$,
in contrast with the rational case where ternary forms
alwasy have local obstructions \cite{doylewilliams}.
In general, universal quadratic forms tend to have many
variables \cite{blomerkala,kalatinkova,KYZ}
and a major open problem is Kitaoka's conjecture,
which states that there are
only finitely many number fields admitting a universal ternary form.
Partial results towards this are given in \cite{KY}.
We are not aware of 290-theorems over any number field,
although there are a 15-theorem over $\QQ(\sqrt{5})$, see~\cite{lee},
as well as other partial results over quadratic fields \cite{deutsch1,deutsch2,sasaki}.

\subsection*{Acnokwledgements}
We would like to thank Vitezslav Kala for introducting us to the topic and for several suggestions.
We also thank Pavlo Yatsyna for fruitful discussions.
We thank Faruk G\"olo\u{g}lu and Pietro Mercuri for their help with running \textsc{magma}.
The authors would like to thank the Institut Mittag-Leffler for the hospitality during the thematic semester on Analytic Number Theory in 2024.

\subsection*{Funding}
M.B.~was supported by OP RDE project No.~\path{CZ.02.2.69/0.0/0.0/18_053/0016976}
International mobility of research, technical and administrative staff at the Charles University
and was partially supported by the Swedish Research Council (2020-04036).
G.C.~was supported by Czech Science Foundation GACR, grant~\texttt{21-00420M}.

\section{Quadratic forms and lattices}\label{S2}

Let $Q$ be a quadratic form in $r$ variables, so that
$Q(x)=\frac{1}{2}x^t Ax$ for some $r\times r$ symmetric matrix $A$.
The form $Q$ is \emph{integral} (or `integer-valued')
if $A$ has integer coefficients and even diagonal;
$Q$ is \emph{classical}
(or `integer-matrix')
if all the coefficients of $A$ are even.

We associate a lattice $L$ to a positive definite integral form $Q$
by letting $L=\ZZ^r$ and defining an inner product on $L$ by setting
\begin{equation}\label{innerproduct}
\langle x,y\rangle = \frac{1}{2}(Q(x+y)-Q(x)-Q(y)).
\end{equation}
Clearly, $\langle x,x\rangle=Q(x)$ is integral,
but arbitrary products $\langle x,y\rangle$ need not be integral.
In matrix form, \eqref{innerproduct} is given by $\frac{1}{2}x^tAy$,
so that $A$ is the Gram matrix of $L$.
Throughout the paper, we will often use lattice-related terminology for quadratic forms
and vice versa.

Let $n$ be a positive integer. We say that $Q$ \emph{represents} $n$ if there is
$x\in\ZZ^r$ such that $Q(x)=n$. If $n$ is not represented by $Q$, we say that
$n$ is an \emph{exception} for $Q$. The smallest exception of a form is called its \emph{truant}.
Similarly, let $p$ be a prime and let $\ZZ_p$ denote the ring of $p$-adic integers.
We say that $Q$ represents $n$ locally if for every $p$ there is some $x\in\ZZ_p^r$
such that $Q(x)=m$. A form is \emph{regular} if it represents
every integer that is represented locally.

Finally, if $Q'$ is a quadratic form of rank $r'\leq r$, we say that $Q$ represents $Q'$
if there is a $r\times r'$ matrix $M$ with integer coefficients such that $Q(My)=Q'(y)$
for all $y\in\ZZ^{r'}$. Note that this happens if and only if there is a sublattice
$L''\subseteq L$ isometric to $L'$.

\section{Binary, ternary and quaternary escalators}\label{sec:coprime}

Let $p$ be a prime and write $T_p=\{t\geq 1: (t,p)=1\}=\{t_1,t_2,\dots\}$.
We say that a positive definite quadratic form $Q$ (resp.~a lattice $L$) is
coprime-universal with respect to $p$ if it represents every integer in $T_p$
(resp.~it contains a vector of norm $t$ for every $t\in T_p$).
When the prime $p$ is clear from the context, we will simply say
that $Q$ is coprime-universal.

Given a quadratic form $Q$ with associated lattice $L$,
let $t$ be an exception for~$Q$. An \emph{escalation} of $L$
(or `escalator lattice', or simply `escalator')
is a lattice $L'$ generated by $L$ and a vector of norm $t$.

If we start with a coprime-universal lattice $L$,
we can construct a chain of escalator sublattices in $L$ as follows:
since $L$ is coprime-universal, it certainly will contain a
one-dimensional lattice $L_1$ generated by a vector of norm $t_1$.
We can escalate $L_1$ by the smallest $t_j$ not represented (the truant of $L_1$)
and obtain a rank-two sublattice $L_2$ in $L$, which we can escalate
by the truant of $L_2$, and so on.
This construction produces a sequence of escalator lattices
\[
\{0\}=L_0\subseteq L_1\subseteq L_2\subseteq L_3\subseteq\cdots \subseteq L.
\]
Since $L\cong \ZZ^r$ is a Noetherian $\ZZ$-module, this sequence
eventually stabilizes. Thus, there is some $L_n\subseteq L$ that is
coprime-universal.
As it will emerge from our proof, it suffices to
stop at $n=4$ in the majority of cases.

We begin by escalating the zero-dimensional matrix,
which gives us the one-dimensional lattice with Gram matrix
$A=(2)$, corresponding to the quadratic form $Q(x)=x^2$.
This has truant $2$, so its escalations have Gram matrix
\[
\begin{pmatrix}
2 & a\\
a & 4
\end{pmatrix}
\]
with $a\in\ZZ$ and $a^2\leq 8$ by Cauchy--Schwarz.
Up to isometry, we can assume $a\geq 0$.
Thus, we obtain three non-isometric escalators of rank two:
\begin{equation}\label{eq:22}
A_1 =
\begin{pmatrix}
2 & 0 \\
0 & 4 
\end{pmatrix},
\quad
A_2 =
\begin{pmatrix}
2 & 1 \\
1 & 4 
\end{pmatrix},
\quad
A_3 =
\begin{pmatrix}
2 & 2 \\
2 & 4 
\end{pmatrix}.
\end{equation}
Note that $A_3$ is equivalent to the diagonal matrix $2\mathbf{I}_2$.
The associated quadratic forms $Q_1(x,y)=x^2+2y^2$,
$Q_2(x,y)=x^2+xy+2y^2$ and $Q_3(x,y)=x^2+2xy+2y^2$
(or, equivalently, $Q_3(x,y)=x^2+y^2$) have exceptions
\[
\begin{split}
\mathrm{Exceptions}(Q_1) &= 5, 7, 10, \dots\\
\mathrm{Exceptions}(Q_2) &= 3, 5, 6, \dots\\
\mathrm{Exceptions}(Q_3) &= 3, 6, 7, \dots\\
\end{split}
\]
Therefore, when $p=5$ we can
escalate $A_1,A_2$ and $A_3$ by $7,3$ and $3$,
respectively. When $p\geq 7$ we use $5,3$ and $3$ instead.
This way we obtain 43 non-isometric
escalators of rank three when $p=5$
and 35 escalators when $p\geq 7$,
see Table~\ref{table:ternary-summary}.

Some of the ternary escalators have no exceptions
coprime to $p$ less than 1000 and 
in fact we can prove they are coprime-universal.

\begin{table}[!ht]
\small
\renewcommand\arraystretch{1.25}
\begin{tabular}{r|>{$}r<{$}|l}
\hline
           $p$     & \text{Coprime Universal Forms}  & \\
\hline
\multirow{6}{*}{5} & x^2 +5xz + 2y^2  +yz +7z^2      & \\
                   & x^2 +4xz + 2y^2 +3yz +7z^2      & \\
                   & x^2 +4xz + 2y^2 +2yz +7z^2      & \\
                   & x^2 +2y^2 +4yz +7z^2            & \\
                   & x^2 +xy +2xz +2y^2 +3yz +3z^2   &\\
                   & x^2 +xy +2y^2 +yz +3z^2         &\\
\hline
\multirow{2}{*}{7} & x^2 +4xz +2y^2 +yz +5z^2        & \\
                   & x^2 +3xz +2y^2 +yz +5z^2        & \\
\hline
                11 & x^2 +xz +y^2 +3z^2              & \\
\hline
                13 & x^2 +3xz +2y^2 +3yz +5z^2       & \\
\hline
                17 & x^2 +xy +xz +2y^2 +2yz +3z^2    & \\
\hline
\end{tabular}
\captionsetup{size=small,width=76mm}
\caption{Provably coprime-universal ternary forms.}\label{provablyternary}
\end{table}

\begin{theorem}\label{S3:thm}
Assume $5\leq p\leq 17$.
The ternary forms in Table \ref{provablyternary} are coprime-universal.
If $p=19$ or $37$, there is no coprime-universal ternary quadratic~form.
\end{theorem}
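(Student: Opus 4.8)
The two assertions pull in opposite directions --- the first is a universality statement about eleven explicit forms, the second a non-existence statement quantifying over all ternary forms --- so I would organise the proof as two independent arguments. For the positive part I would prove coprime-universality form-by-form for the eleven entries of Table~\ref{provablyternary} by the local-to-global method. First I would compute the genus of the form and test whether it is alone in its genus; when it is, the form is regular in the sense of Section~\ref{S2}, and coprime-universality reduces to checking that the only integers omitted by the local conditions (at $2$, at $p$, and at the primes dividing the discriminant) are multiples of $p$, which is a finite verification. I expect several of the six forms attached to $p=5$, and possibly the $p=7$ forms, to fail to be alone in their genus; for those the local-to-global principle breaks down and I would instead invoke the elementary technique of Kaplansky and Jagy (see \S\ref{sec:elementary}), passing to an auxiliary regular form that represents the same integers after an explicit substitution, together with the embedded-genera method when a single auxiliary form does not suffice.

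For the negative part the engine is a square-class obstruction combined with the escalation principle of Section~\ref{sec:coprime}. A positive-definite ternary form is anisotropic at the archimedean place, hence, by the evenness of the number of anisotropic places, anisotropic at some finite prime $q$, which necessarily divides $2\operatorname{disc}(Q)$; at $q$ it omits a whole square class, and since for an odd prime $p\nmid\operatorname{disc}(Q)$ the prime $p$ is a place of good reduction, that omitted square class contains integers coprime to $p$. Any such integer is a genuine exception, so coprime-universality with respect to an odd prime $p$ forces $p\mid\operatorname{disc}(Q)$. By the escalation principle, any coprime-universal ternary form contains a coprime-universal ternary escalator, so it suffices to rule these out among the finitely many ternary escalators (the $35$ arising for $p\geq 7$). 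For $p=19$ and $p=37$ I would first discard every escalator whose discriminant is not divisible by $p$ using the necessary condition just established, and then, for each of the few remaining candidates, exhibit a single integer coprime to $p$ that it fails to represent, certified by a bounded search over the positive-definite form. Since no escalator survives, no coprime-universal ternary form exists for these primes.

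The principal difficulty lies in the positive part, specifically in the forms that are not alone in their genus: for these, proving unconditionally that every integer coprime to $p$ is represented cannot be reduced to a local computation, and the crux is to produce the correct auxiliary regular form (or embedded genus) and to verify that the associated change of variables really recovers all the required integers. By contrast, once the discriminant-divisibility criterion and the escalation reduction are in hand, the negative part is a finite and essentially mechanical check.
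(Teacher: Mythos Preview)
Your negative argument is sound and in fact a little sharper than the paper's: the paper simply checks that each of the $35$ ternary escalators (the common list for $p\geq 7$) has an exception coprime to $p$ when $p\in\{19,37\}$, whereas your discriminant-divisibility criterion would prune that list before the search. Both routes rest on the same reduction to escalators.

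The positive part is where you diverge from the paper and take on unnecessary risk. The paper's proof is one line: each of the eleven forms in Table~\ref{provablyternary} appears in the Jagy--Kaplansky--Schiemann list~\cite{913} of provably regular ternary forms, so regularity is already established in the literature; coprime-universality then follows from the finite verification that each form locally represents every class coprime to~$p$. You instead propose to test whether each form is alone in its genus and, when it is not, to fall back on Kaplansky--Jagy substitutions or embedded genera. Two issues arise. First, you write that for forms not alone in their genus ``the local-to-global principle breaks down'', but regularity is strictly weaker than being the unique class in the genus---many entries of~\cite{913} have genus of size larger than one---so that inference is not justified. Second, and this is the real risk, the fallback tools you name are exactly those the paper applies in Section~\ref{sec:elementary} to the \emph{conjectured} forms of Table~\ref{table:ternaryforms}, where they deliver only the partial congruence results of Theorem~\ref{theo:partial1}; full coprime-universality for those forms requires GRH. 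If any of the eleven forms resisted your fallback in the same way, your argument would stall, whereas a lookup in~\cite{913} settles regularity at once.
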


\begin{proof}
Assume $5\leq p\leq 17$. Each form in Table \ref{provablyternary}
locally represents everything coprime to $p$.
Moreover, they are all regular, as one can readily verify
by checking the list of provably regular forms in \cite{913}.
Hence, they represent all integers coprime to $p$.
When $p=19$ or $37$, all the ternary escalators
have at least one exception coprime to $p$.
\end{proof}

In addition, when $p=5,23,29$ and $31$,
there are also ternary forms that do not have exceptions
up to 1000, but for which we cannot prove coprime-universality,
see Table~\ref{table:ternaryforms}.
They represent everything coprime to $p$ locally,
but are not in the list \cite{913} of regular ternary forms.
Conjecture \ref{intro:conjecture} states
that these forms are indeed coprime-universal, and we assume
from now on the validity of this conjecture.

Summarizing, the situation with ternary forms is described
in Table~\ref{table:ternary-summary}.

\begin{table}[!ht]
\begin{tabular}{>{\raggedleft\footnotesize\hspace{0pt}}r*{4}{|>{\raggedleft\footnotesize\hspace{0pt}}p{18mm}}|}
\hline
$p$ & Ternary forms & Provably coprime-universal & Conjectured coprime-universal & With exceptions \tabularnewline
\hline
         1  &  34  &   0  &  0  &  34 \tabularnewline
\hline
         2  &  73  &  20  &  3  &  50 \tabularnewline
\hline
         3  &  50  &  11  &  0  &  39 \tabularnewline
\hline
         5  &  43  &   6  &  2  &  35 \tabularnewline
\hline
         7  &  35  &   2  &  0  &  33 \tabularnewline
\hline
11, 13, 17  &  35  &   1  &  0  &  34 \tabularnewline
\hline
23, 29, 31  &  35  &   0  &  1  &  34 \tabularnewline
\hline
    19, 37  &  35  &   0  &  0  &  35 \tabularnewline
\hline
\end{tabular}
\captionsetup{size=footnotesize,width=102mm}
\caption{Number of ternary escalators. When $p\geq 7$, the total set of ternary escalators is the same, although different forms are coprime-universal (conjecturally or provably) with respect to different primes, see Table~\ref{table:ternaryforms}
and Table~\ref{provablyternary}.
}\label{table:ternary-summary}
\end{table}

\subsection{Quaternary escalators}\label{S3.1}
Next we escalate the ternary forms having at least one exception.
There are $35$ such forms when $p\in\{5,19,37\}$,
$33$ when $p=7$ and $34$ when $p\in\{11,13,17,23,29,31\}$.

This step produces a large set of matrices of rank four
(between five and ten thousand for each prime approximately),
as described in Table \ref{table:quat}.
Most matrices represent everything coprime to $p$ locally
(hence, in particular, we can start checking if they are coprime-universal globally).
We will refer to these forms as the `basic quaternary forms'.
Yet, a handful of escalators still have local obstructions
(hence, they cannot be coprime-universal). An example is the form
\begin{equation}\label{2306:eq001}
Q(x,y,z,w) = x^2 + 2y^2 + 7z^2 + 7zw + 14w^2,
\end{equation}
which appears among the four-dimensional escalators
when $p=5$, but fails to represent all integers of the form $7^ra$
with $r$ odd and $a\equiv 3,5$ or $6\pmod{7}$.

To handle these `critical quaternary forms',
instead of escalating again to rank five,
we argue as in \cite[\S 5.2]{BH} and \cite{DeBenedetto, Rouse}:
we recover the ternary sublattice from which the forms were obtained,
and escalate it with a number other than the truant.

For instance, the form in \eqref{2306:eq001} is obtained by
adjoining a vector of norm $14$ to the ternary
sublattice $x^2+2y^2+7z^2$, whose exceptions~$\leq 50$ are
$5,14,20,21,35$ and~$42$;
note that $Q$, too, misses $5,21,35$ and~$42$.
There are $22$ critical quaternary forms when $p=5$;
aside from $Q$, one more, namely
\(
x^2 + 2y^2 + 7yw + 7z^2 + 7zw + 14w^2,
\)
comes from the same ternary sublattice and again it fails to represent $21$.
Therefore, any five-dimensional escalator coming from either of
these two forms is obtained by adjoining a vector of norm~$21$.

This leads to the idea of the `switch': escalating first by~$14$
and then by $21$ can be done in reverse order.
Thus, we consider a new set of `auxiliary quaternary escalators'
obtained from $x^2+2y^2+7z^2$ by adjoining a vector of norm~$21$.
After discarding any lattice isometric to a previously found one,
this produces a new set of quaternary escalators,
which fortunately have no local obstructions.

For the remaining $20$ critical quaternary forms we argue similarly: 
six of them come from a ternary sublattice to which we apply a 13--14 switch,
and for all the others, which come from two distinct ternary sublattices
(cf. Table \ref{table:ternaryfortheswitch}), we do a 14--26 switch.
We also note that, for the escalators of $x^2+xz+2y^2+7z^2$,
their common truant $26$ is the third exception, and not the second one,
of the ternary sublattice.
Altogether, we originally obtain 10451 quaternary escalators,
of which 10429 basic, to which we add 1188 auxiliary ones.
Understanding the squarefree integers represented by these $10429+1188=11617$
quadratic forms will suffice to prove Theorem \ref{theo:MAIN} when $p=5$.

\begin{table}[!ht]
\begin{tabular}{>{\raggedleft\footnotesize\hspace{0pt}}c*{5}{|>{\raggedleft\footnotesize\hspace{0pt}}p{17mm}}|}
\hline
$p$ & Escalators & Basic & Local obstructions & Original Ternaries & Auxiliary \tabularnewline
\hline
 1 &  6560 &  6555 &  5 & 1 &  104 \tabularnewline
\hline
 2 & 24312 & 24308 &  4 & 2 &  580 \tabularnewline
\hline
 3 &  8894 &  8884 & 10 & 3 &  727 \tabularnewline
\hline
 5 & 10451 & 10429 & 22 & 4 & 1188 \tabularnewline
\hline
 7 &  7563 &  7558 &  5 & 1 &  486 \tabularnewline
\hline
11 &  6344 &  6339 &  5 & 1 &  104 \tabularnewline
\hline
13 &  6849 &  6844 &  5 & 1 &  104 \tabularnewline
\hline
17 &  6320 &  6315 &  5 & 1 &  104 \tabularnewline
\hline
19 &  6572 &  6567 &  5 & 1 &  104 \tabularnewline
\hline
23 &  6144 &  6139 &  5 & 1 &  104 \tabularnewline
\hline
29 &  5898 &  5893 &  5 & 1 &  104 \tabularnewline
\hline
31 &  5750 &  5745 &  5 & 1 &  104 \tabularnewline
\hline
37 &  6572 &  6567 &  5 & 1 &  104  \tabularnewline
\hline
\end{tabular}
\captionsetup{size=footnotesize,width=110mm}
\caption{Number of quaternary escalators, basic forms and auxiliary forms.}\label{table:quat}
\end{table}

Regarding the primes $7\leq p\leq 37$, the situation is slightly simpler.
Among the quaternary escalators, only five have local obstructions (always at the prime $2$):
\begin{equation}\label{2808:eq001}
\begin{split}
&x^2+xz+2y^2+2yz+5z^2+2xw-2zw+10w^2\\
&x^2+xz+2y^2+2yz+5z^2+2xw-4zw+10w^2\\
&x^2+xz+2y^2+2yz+5z^2+4xw+4yw+2zw+10w^2\\
&x^2+xz+2y^2+2yz+5z^2+4xw+4yw-2zw+10w^2\\
&x^2+xz+2y^2+2yz+5z^2+6xw+2zw+10w^2\\
\end{split}
\end{equation}
All of these forms have $14$ and $30$ as their first two exceptions
and are obtained by adjoining a vector of norm $10$ to the same ternary sublattice,
namely $x^2+xz+2y^2+2yz+5z^2$, which has exceptions $10,13,14,30,40$ and $46$ up to 50.
When $p=7$ we do a 10--30 switch, obtaining 486 auxiliary escalators, whereas
when $11\leq p\leq 37$ we do a 10--14 switch, which produces just 104 auxiliary lattices.
In fact, the forms in \eqref{2808:eq001} are the same appearing in Bhargava and Hanke's escalation process
when $p=1$ and our 10--14 switch corresponds to the one they did in \cite[\S 5.2]{BH}.

\begin{table}[!ht]
\small
\renewcommand\arraystretch{1.25}
\begin{tabular}{r|>{$}r<{$}|r|>{\centering\hspace{0pt}}p{15mm}|}
\hline
$p$ & \text{Ternary Sublattices\phantom{\dots}} & Exceptions\phantom{ \dots} & Used for the switch\tabularnewline
\hline
\multirow{4}{*}{5}
   & x^2 -3xz +2y^2 -2yz +7z^2 & 10,13,14,30,40 \dots& 14 \tabularnewline
   & x^2 +2y^2 +7z^2 & \phantom{x} 5,14,20,21,35,42 \dots& 21
   \tabularnewline
   & x^2 -3xz +2y^2 +7z^2 & 10,14,26,40,42 \dots& 26 \tabularnewline
   & x^2 -xz +2y^2 +7z^2 & \phantom{x} 5,10,14,20,23,26 \dots& 26 \tabularnewline
\hline
 $7$ & \multirow{2}{*}{$x^2 -xz +2y^2 -2yz +5z^2$} & \multirow{2}{*}{10,13,14,30,40 \dots}& 30 \tabularnewline
 $\neq 5,7$ & & & 14 \tabularnewline
\hline
\end{tabular}
\captionsetup{size=small}
\caption{Ternary forms used for the switch.}\label{table:ternaryfortheswitch}
\end{table}

\section{Methods}

The next step is verifying whether the basic and the auxiliary escalators
constructed in the previous section represent every positive integer coprime to $p$.
We do this by a combination of four methods,
as was done in \cite{BH,DeBenedetto,Rouse},
but we also include an initial test with the 290-theorem,
which allows us to easily discard a big portion of our forms.
We will refer to such a test as `Method zero'. In Section \ref{S4.1}
we discuss its effects.
Methods 1 and 2, algebraic in nature, are described in Sections
\ref{S4.2} and \ref{S4.3}, respectively.
Methods 3 and 4, of a more analytic flavour,
will be discussed in later sections, after we introduce
certain required notions about modular forms.
The implementation of Methods 1--4 is based on the \textsc{magma}
code from~\cite{DeBenedetto,rouse:code}, appropriately modified,
see~\cite{publiccode}.

\subsection{Method zero: initial check}\label{S4.1}

If a positive definite integral quadratic form represents all integers up to 290,
then we know it is universal by Bhargava and Hanke's theorem \cite[Theorem 1]{BH}.
It turns out that this happens for many of our forms:
around six thousand forms for any given prime, see Table~\ref{table:methods}.
When $p\geq 11$, this amounts to more than 92\% of all our basic forms!
For completeness, we note that this also applies to 5442 out of 24888
quaternary escalators when $p=2$, and 6339 out of 9611 escalators when $p=3$.

We also know \cite[Theorem 3]{BH} that if a form represents everything below $290$
then it represents everything above that; and the same holds
with $15$ in place of $290$ if the form is classical \cite[\S6(iv)]{Bhargava}.
Among our quaternary escalators, there is none representing all of $1,\dots,289$,
but there are four classical forms representing $1,\dots,14$ when $p\geq 7$.
Since these forms become universal with one more escalation step,
they do not need to be analyzed further.

The above checks are extremely quick to run:
for each prime, testing all the basic quaternary
escalators takes less than 12 seconds in \textsc{pari/gp}.
To appreciate the time saving, note that Method zero works
on Form 3391 in \cite{DeBenedetto,rouse:code}, which required otherwise
22h of computation time using Method 3 \cite[p.441]{DeBenedetto}.
Similarly, it works on Form 22145
in \cite{Rouse}, which required otherwise to work with
a space of modular forms of dimension 936 with 19 Galois conjugacy classes
and required almost a day of computation \cite[p.1736]{Rouse}.

\subsection{Method 1: coprime-universal sublattices}\label{S4.2}

Given a quaternary lattice $L$, we check if there is a sublattice $L'\subseteq L$
whose quadratic form is one of those listed in Table \ref{table:ternaryforms}
or Table \ref{provablyternary}. Since all of these forms represent every
positive integer coprime to $p$ (we are assuming Conjecture \ref{intro:conjecture}),
then if such $L'$ exists we deduce that $L$ is coprime-universal.

\begin{example}
The quaternary form
\[
Q(x,y,z,w) = x^2 +xz +6xw + 2y^2 +2yz + 7z^2 +3zw + 14w^2
\]
appears as Form~6179 among the basic quaternary escalators when $p=5$
(here and below, numbering refers to~\cite{publiccode}).
A local inspection shows that $Q$ fails to represent all integers
of the form $5^ru$ with $r$ odd and $u\equiv\pm 2\pmod{5}$. We have
\[
Q(x-3z,y-z,0,z) = x^2 + 2y^2 +4yz + 7z^2,
\]
which is the fourth ternary form in Table \ref{provablyternary}
and is coprime-universal by Theorem~\ref{S3:thm}.
It follows that $Q$ is coprime-universal.
\end{example}

\begin{example}
The quaternary form
\[
Q(x,y,z,w) = x^2 +xz +3xw + 2y^2 +2yz +7yw + 7z^2 +5zw + 14w^2
\]
appears as Form~6292 among the basic quaternary escalators when $p=5$.
A local inspection shows that $Q$ fails to represent all integers
of the form $5^ru$ with $r$ odd and $u\equiv\pm 1\pmod{5}$. We have
\[
Q(x-z,y-z,0,z) = x^2 +xz + 2y^2 +3yz +7z^2,
\]
which is the first ternary form associated to $p=5$ in
Table~\ref{table:ternaryforms}.
Assuming Conjecture~\ref{intro:conjecture}
such a form is coprime-universal, hence so is $Q$.
\end{example}

Method 1 is very fast. Unfortunately, it does not apply to
any quaternary form when $p\geq 7$.
When $p=5$, it applies to 212 basic forms and one auxiliary form.

\subsection{Method 2: regular sublattices}\label{S4.3}

By the works of Jagy--Kaplansky \cite{913} and Oh~\cite{Oh},
it is known that there are 899 integral positive definite regular
ternary quadratic forms. Additionally, fourteen
more ternary forms were conjectured to be regular in~\cite{913}
and this was proved under the assumption of the Generalized Riemann Hypothesis
by Lemke-Oliver \cite{LO}. No other ternary form is regular.

Using the list of 899 provably regular ternary forms, we check if a quaternary lattice $L$
contains a sublattice $L'\subseteq L$ whose quadratic form, say $T(x,y,z)$, appears in such a list.
When this happens, we complete $T$ to a quaternary form by adding a vector
orthogonal to $T$ in $L$ and obtain a quadratic form
\begin{equation}\label{2606:eq002}
T(x,y,z) + dw^2,
\end{equation}
for some positive integer $d$. Whether an integer $m$ is represented
by $T$ depends only on congruence conditions.
We then show that \eqref{2606:eq002} represents all positive
integers except a finite list that can be detected by congruence conditions.
The way to obtain such a list explicitly is best explained by means of an example.

\begin{example}
Consider the form
\[
Q(x,y,z,w) = x^2 + 2y^2 + 5z^2 + 5zw + 10w^2,
\]
which appears as Form~6611 among the basic escalators when $p=7$.
Clearly, $Q$ represents $T(x,y,z)=x^2+2y^2+5z^2$, which is regular.
Its orthogonal complement in $Q$ is generated by a vector of norm $35$.
Hence, the form
\begin{equation}\label{2606:eq001}
T(x,y,z) + 35w^2
\end{equation}
corresponds to a sublattice of $Q$
(indeed, $Q(x,y,z+w,-2w)$ returns \eqref{2606:eq001}).
Local considerations tell us that $T$ represents every positive integer
except those of the form $5^ru$ with $r$ odd and $u\equiv \pm 2\pmod{5}$.
Thus, the classes $10$ and $15\pmod{25}$ are not represented.
On the other hand, \eqref{2606:eq001} represents $35$ and $T$
represents everything $\equiv 5\pmod{25}$, so \eqref{2606:eq001}
represents all integers $\equiv 15\pmod{25}$ and strictly greater than $15$.
We argue similarly for the class $10\pmod{25}$; now,
$T$ does not represent everything $\equiv 0\pmod{25}$,
but it does represents $25k$ with $5\!\nmid\! k$.
Therefore, \eqref{2606:eq001} represents all integers $\equiv 10,60,85,110\pmod{125}$
and strictly greater than $10$. As for the class $35\pmod{125}$:
for every $k\geq 1$ we can write $125k+35=5(25k-21)+35\cdot 2^2$;
since integers of the form $5(25k-21)$ are represented by $T$ for every $k$,
we deduce that every positive integer $\equiv 35\pmod{125}$ is represented by \eqref{2606:eq001}.

Altogether, we showed that \eqref{2606:eq001} represents all positive integers
except $10$ and~$15$. We then go back to the original form $Q$
and find that it represents $10$ but does not represent $15$.
Therefore, $Q$ has the single exception $15$ and it represents
all other positive integers coprime to $7$.
\end{example}

Returning to the general case, we do a local inspection on $T$
and initialize a modulus $M$ such that, for every residue class $a\pmod{M}$,
either $T$ represents all squarefree integers in that class or no integer.
Then, in each arithmetic progression not represented by $T$, we find
the smallest integer represented by $T(x,y,z)+dw^2$ with $T(x,y,z)\neq 0$.
This gives a modulus $M'\geq M$ such that
every number in the residue class $a\pmod{M'}$ is represented.
If $M'=M$, then we are finished with this residue class.
If $M'>M$, then we repeat the argument to the classes
$a+kM\pmod{M'}$ that contain squarefree integers.
Once all the residue classes have been checked, we are left
with a finite list of exceptions for $T(x,y,z)+dw^2$.
We then go back to the original quaternary form $Q$
to see if $Q$ represents these numbers.
For more details, we refer to \cite[p.4]{BH},
\cite[pp.438--439]{DeBenedetto} and \cite[pp.1731--1732]{Rouse}.

Method 2 runs relatively fast: for each form it does not take more than a few minutes.
It works on a fair amount of forms, too. As an example, it works 
on 74 out of 148 forms when $p=11$, hence corresponding to 50\%
of those forms to which Method zero and Method 1 do not apply,
and on 1171 out of 1312 forms when $p=7$, leaving only 141 forms.
For a detailed account of how many forms are treated with Method 2,
see Table \ref{table:methods}.

\begin{table}[!ht]
\footnotesize
\renewcommand\arraystretch{1.15}
\begin{tabular}{r|*{6}{>{\color{\darkgreen}}c|>{\color{\darkmagenta}}c|}}
\hline
$p$ & \multicolumn{2}{c|}{Forms} & \multicolumn{2}{c|}{Method zero} & \multicolumn{2}{c|}{Method 1} & \multicolumn{2}{c|}{Method 2} & \multicolumn{2}{c|}{Method 3} & \multicolumn{2}{c|}{Method 4}\\
\hline
 5 & 10429 & 1188 & 6145
                        & 5 & 212 & 1 & 1824 & 87 & 1201 & 541 & 1047 & 554 \\
\hline
 7 &   7558 & 486 & 6246
                        & 0 &  0  & 0 & 1171 & 19 & 48 & 286 & 93 & 181 \\
\hline
11 &   6339 & 104 & 6191
                        & 0 &  0  & 0 & 74 & 10 & 39 & 57 & 35 & 37 \\
\hline
13 &   6844 & 104 & 6306
                        & 0 & 0 & 0 & 87 & 10 & 265 & 57 & 186 & 37 \\
\hline
17 &   6315 & 104 & 6167
                        & 0 & 0 & 0 & 74 & 10 & 39 & 57 & 35 & 37 \\
\hline
19 &   6567 & 104 & 6418
                        & 0 & 0 & 0 & 75 & 10 & 39 & 57 & 35 & 37 \\
\hline
23 &   6139 & 104 & 5990
                        & 0 & 0 & 0 & 75 & 10 & 39 & 57 & 35 & 37 \\
\hline
29 &   5893 & 104 & 5708
                        & 0 & 0 & 0 & 74 & 10 & 72 & 57 & 39 & 37 \\
\hline
31 &   5745 & 104 & 5597
                        & 0 & 0 & 0 & 75 & 10 & 38 & 57 & 35 & 37 \\
\hline
37 &   6567 & 104 & 6418
                        & 0 & 0 & 0 & 75 & 10 & 39 & 57 & 35 & 37 \\
\hline
\end{tabular}
\captionsetup{size=small,width=110mm}
\caption{How many forms are treated with each method.}\label{table:methods}
\end{table}

\section{Analytic methods}\label{S5}

The last two methods rely on the theory of modular forms.
In this section, we first recall some properties of theta series
and their decomposition into Eisenstein and cuspidal part.
Then, we explain how to effectively use such a decomposition
in Methods~3 and 4.

Let $Q$ be a positive definite quaternary integral quadratic form.
As in Section~\ref{S2}, we write $Q(x)=\frac{1}{2}x^tAx$,
where $A$ has integer coefficients.
The number $\det(A)$ is called the \emph{discriminant}~of~$Q$, whereas
the \emph{level} of $Q$ is the smallest positive integer~$N$ such that
the matrix $NA^{-1}$ has integer entries and even diagonal.
Clearly, the discriminant has this property, but a smaller integer often works.

Denoting $r_Q(n)=|\{x\in\ZZ^4:Q(x)=n\}|$, we build the theta series ($q=e^{2\pi iz}$)
\[
\theta_Q(z) = \sum_{n=0}^{\infty} r_Q(n)q^n,
\]
which is a modular form on $\Gamma_0(N)$ of weight 2 and
character $\chi_D(n)=\big(\frac{D}{n}\big)$, see e.g.~\cite[Theorem 10.8]{topics}.
We can therefore decompose $\theta_Q$ as the sum of its Eisenstein
and cuspidal projections:
\[
\theta_Q(z) = E(z) + C(z) = \sum_{n\geq 0} a_E(n)q^n + \sum_{n\geq 1} a_C(n)q^n.
\]
With the goal of showing that the Eisenstein coefficient is larger than
the absolute value of the cuspidal coefficient (so that $r_Q(n)>0$),
let us calculate explicit lower bounds on the former and upper bounds on the latter.

The coefficient $a_E(n)$ is a product of local densities:
\[
a_E(n) = \prod_{p\leq \infty} \beta_p(n).
\]
If $n$ is locally represented, then $\beta_p(n)\neq 0$ for all $p$.
Moreover, if $n$ is squarefree, then~\cite[Theorem 5.7(b)]{Hanke} (see also~\cite{Yang})
\[
\beta_p(n)=
\begin{cases}
\frac{\pi^2n}{\sqrt{\det(A)}} & \text{if $p=\infty$}\\
1-\frac{\chi_D(p)}{p^2} & \text{if $p\nmid N$ and $p\nmid n$}\\
\frac{(p-1)(p^2+(1+\chi_D(p))p+1)}{p^3} & \text{if $p\nmid N$ and $p|n$.}
\end{cases}
\]
As a consequence, we have the lower bound (cf.~\cite[p.~440]{DeBenedetto})
\[
a_E(n) \geq \frac{\pi^2n}{\sqrt{\det(A)}L(2,\chi_D)}\left(\prod_{p|N}\frac{\beta_p(n)}{1-\chi_D(p)p^{-2}}\right)\prod_{\substack{p\,\nmid N\\p|n,\chi_D(p)=-1}}\frac{p-1}{p+1}.
\]
Next, we compute the densities $\beta_p(n)$ for all $p|2N$ and all classes of $\ZZ_p/\ZZ_p^2$ that contain
squarefree integers, obtaining an effective constant $C_E$, depending on~$Q$, such that
\begin{equation}\label{0807:eq001}
a_E(n) \geq C_E n \prod_{\substack{p|n\\p\,\nmid N,\chi_D(p)=-1}}\frac{p-1}{p+1}
\end{equation}
for all squarefree positive integers $n$ locally represented by $Q$.
This accounts for the Eisenstein contribution.

As for the cuspidal part, we can explicitly decompose
\begin{equation}\label{0807:eq003}
C(z) = \sum_{d|N} \sum_{i=1}^{s} \sum_{e|\frac{d}{\mathrm{cond}\,\chi_D}} c_{d,i,e} \, g_i(ez),
\end{equation}
where $c_{d,i,e}$ are complex numbers and $\{g_i\}_{i=1}^s$ is a basis of
normalized Hecke eigenforms for the new subspace of $S_2(\Gamma_0(d),\chi_D)$.
By Deligne's bound, the $n$th Fourier coefficient of each $g_i(z)$
is bounded in absolute value by $d(n)\sqrt{n}$. Thus, if we set
\begin{equation}\label{0807:eq002}
C_Q = \sum_{d|N} \sum_{i=1}^{s} \sum_{e|\frac{d}{\mathrm{cond}\,\chi_D}} \frac{|c_{d,i,e}|}{\sqrt{e}},
\end{equation}
we have $|a_C(n)|\leq C_Q d(n) \sqrt{n}$.

Combining \eqref{0807:eq001} and \eqref{0807:eq002}, we deduce that
there is a constant $F$ (namely, $\frac{C_Q}{C_E}$ in the above notation)
such that if
\[
F_4(n) := \frac{\sqrt{n}}{d(n)} \prod_{\substack{p|n\\p\,\nmid N,\chi_D(p)=-1}}\frac{p-1}{p+1} > F,
\]
then $n$ is represented by $Q$.
In order to determine all the squarefree integers represented by~$Q$,
it suffices to list all squarefree $n$ for which $F_4(n)<F$
and check if they are represented.

\subsection{Method 3}\label{S5.1}

The decomposition \eqref{0807:eq003} can be rather time-consuming
to calculate, as well as heavy on memory, because it requires solving
a linear system on a vector space of large dimension.
As an example, the form $Q(x,y,z,w)=x^2+2y^2+7z^2+xw+yw+21w^2$,
which appears as Form~1130 among the auxiliary escalators when $p=5$,
has discriminant $D$ and level $N$ equal to $4620$
and the dimension of $S_2(\Gamma_0(N),\chi_D)$ is 1136.
This is the largest dimension among all the forms that
are not treated with one of the previous methods.
In the third method we use a theoretical shortcut to obtain a reasonable
upper bound on $C_Q$ without having to go through linear algebra.
We sketch here the main ideas and refer to \cite[pp.~1732--1734]{Rouse}
and \cite[\S 6]{DeBenedetto} for more details.

We require that $D$ is a fundamental discriminant, hence $\chi_D$ is primitive
and the decomposition \eqref{0807:eq003} reduces to the single sum over $i=1,\dots,s$,
because the new subspace of $S_2(\Gamma_0(N),\chi_D)$ is the entire space.
Note that, if there exist two positive numbers $A,B$ such that
\[
\langle C,C\rangle \leq A \quad\text{and}\quad \langle g_i,g_i\rangle \geq B,\quad \forall\;i=1,\dots,s
\]
(here $\langle f,g\rangle$ denotes the Petersson inner product),
then by Cauchy--Schwarz and the orthogonality of the $g_i$'s we obtain
\begin{equation}\label{2509:eq001}
C_Q
=
\sum_{i=1}^{s} |c_i|
\leq
\sqrt{s} \left(\sum_{i=1}^{s} |c_i|^2\right)^{1/2}
\leq
\sqrt{\frac{s}{B}}\sqrt{\langle C,C\rangle} \leq \sqrt{\frac{As}{B}}.
\end{equation}

Rouse proved \cite[Proposition 11]{Rouse} that if $g_i$
has no complex multiplication (CM in short), then
\[
\langle g_i,g_i \rangle \geq \frac{3}{208\pi^4\log N}\prod_{p|N}\left(1+\frac{1}{p}\right)^{-1}.
\]
For any given $Q$, one can enumerate all CM-forms in $S_2(\Gamma_0(N),\chi_D)$
and verify that the same bound holds for them.

An upper bound for $\langle C,C\rangle$ can be obtained by passing to $C^*$,
defined as follows: we first take the quadratic form with Gram matrix $NA^{-1}$;
then $C^*$ is the cuspidal projection of its theta series.
By \cite[Proposition 15]{Rouse}, we have $\langle C,C\rangle=N\langle C^*,C^*\rangle$.
Furthermore, if we write $C^*(z)=\sum_{n\geq 1} a(n)q^n$, then
by \cite[Propositions 14,15]{Rouse}, we have
\[
\langle C^*,C^*\rangle = \frac{1}{[\mathrm{SL}_2(\ZZ):\Gamma_0(N)]}
\sum_{n=1}^{\infty} \frac{2^{\omega(\gcd(n,N))}a(n)^2}{n}\sum_{d=1}^{\infty} \psi\left(d\sqrt{\frac{n}{N}}\right),
\]
where $\psi(x)=-\frac{6}{\pi}K_1(4\pi x)+24x^2K_0(4\pi x)$ and $K_0,K_1$ are
the Bessel functions of the second kind and order zero and one, respectively.

From here, arguing as in \cite[p.1733]{Rouse} and \cite[p.442-443]{DeBenedetto}
and splitting the sum over the first $15N$ terms and the tail,
we obtain an inequality of the form
\[
\langle C^*,C^*\rangle
\leq
C_1 + 1.71\cdot 10^{-18}N^{7/4} \left(1+\frac{1}{15N}\right)^{3/2} \frac{s}{B} \langle C^*,C^*\rangle
=
C_1 + \lambda\langle C^*,C^*\rangle,
\]
say. If $\lambda<1$, then one can solve the inequality and obtain an upper bound on
$\langle C^*,C^*\rangle$, from which we recover an upper bound on $\langle C,C\rangle$
as desired.

\begin{example}
The form $Q(x,y,z,w)=x^2+2xz+7xw+2y^2+yz+7z^2+7zw+14w^2$
appears as Form~5251 among the basic quaternary escalators when $p=5$.
Its determinant is $D=329$, a fundamental discriminant, and its level is $329$, too.
The associated cuspidal subspace $S_2(\Gamma_0(329),\chi_{329})$
has dimension $30$ and contains 10 CM-forms,
whereas the Eisenstein subspace has dimension 4
(label 329.2.c on the \textsc{lmfdb} online database \cite{lmfdb}).

In the notation of \eqref{2509:eq001}, we have $s=30$.
Moreover, $B=4.031123835\times 10^{-5}$ is a lower bound on $\langle g_i,g_i\rangle$
for all $i$, while $\langle C,C\rangle\leq A=0.02948733638$.
It follows that $C_Q\leq 148.1376086$.
On the other hand, the Eisenstein constant is $C_E=69/47\geq 1.4680851$,
hence we conclude that $Q$ represents every positive integer $n$ that is squarefree, coprime to $5$
and such that $F_4(n)>101.9143809$. This calculation requires 13 seconds.

There are 2408675 squarefree integers coprime to $5$ having $F_4(n)\leq 101.9143809$
and all of them are represented by $Q$. Therefore, $Q$ represents
all squarefree positive integers coprime to $5$ (note, however, that $Q$ does not represent 5).
This calculation takes 17 seconds.
\end{example}

\subsection{Method 4}\label{S5.2}

Finally, when $D$ is not a fundamental discriminant,
Method 3 cannot be applied and so we return
to the determination of $C_Q$ by means of \eqref{0807:eq003}.

\begin{example}
The form $Q(x,y,z,w)=x^2+xz+2xw+2y^2+5z^2+10w^2$
appears as Form~5550 among the basic quaternary escalators when $p=7$.
It has determinant 136 and level 680. The associated cuspidal subspace
$S_2(\Gamma_0(680),\chi_{85})$ has dimension 100.
Bounding as in \eqref{0807:eq002}, we estimate the cuspidal constant
by $C_Q\leq 15.8324995$.
The Eisenstein subspace has dimension $16$ and we compute the Eisenstein
constant $C_E=4/45$, which leads to $F=179.8967754$.
This calculation takes 22 seconds.

There are 3384052 squarefree integers coprime to $7$ having $F_4(n)\leq F$
and all of them are represented by $Q$. Therefore, $Q$ represents
all squarefree positive integers coprime to $7$ (note, however, that $Q$ does not represent 14 and 56).
This calculation takes 27 seconds.
\end{example}

Among all the quaternary escalators (basic and auxiliary) that are treated with
the analytic methods, Method 3 works for a little over 50\% of the forms
and Method 4 works on the remaining ones, see Table~\ref{table:methods}.
Overall, the analytic methods require several days of computation to complete.

\section{Proof of Theorem \ref{theo:MAIN}}\label{S6}

Fix a prime $5\leq p\leq 37$. Let $Q$ be a positive definite integral quadratic form
and assume that $Q$ represents all integers from the set $S_p$ from Table~\ref{table:Sp}.
By the escalation process, we have the following possibilities:
\begin{itemize}
   \item[(i)\;]  $Q$ represents one of the ternary forms
                 in Table \ref{table:ternaryforms}
                 or Table \ref{provablyternary};
   \item[(ii)\,] $Q$ represents one of the basic quaternary escalators,
                 see Section~\ref{S3.1} and Table~\ref{table:quat};
   \item[(iii)]  $Q$ represents a quaternary escalator with local obstructions;
                 there are 22 such forms when $p=5$ and five forms when
                 $7\leq p\leq 37$, see Table~\ref{table:quat}.
\end{itemize}

In the first case, by Conjecture \ref{intro:conjecture} and
Theorem~\ref{S3:thm}, $Q$ is coprime-universal and we are done.

In case (ii), we look at the possible squarefree integers
not represented by $Q$. Quaternary forms can have several
squarefree exceptions coprime to $p$. If they are all in $S_p$,
then by a finite number of escalations we arrive at a
coprime-universal sublattice of $Q$ and we are done.
If a quaternary form $q$ has exceptions $s_1,\dots,s_N\notin S_p$,
we consider all rank-five escalations of $q$ by its truant $t\in S_p$
and verify that they represent all of $s_1,\dots,s_N$.
Since $Q$ represents $t$, it contains one of these
five-dimensional lattices, hence again, up to possibly escalate
by elements in $S_p$, we deduce that $Q$ is coprime-universal.

\begin{example}
When $p=5$, the form $x^2+xz+2xw+2y^2+7z^2+14w^2$,
which appears as Form~7555, misses $5,10,20,26,122$ and represents
all other positive integers. All five-dimensional escalations by $26$
represent $122$ and so they are coprime-universal.
\end{example}

\begin{example}
Similarly, Form~9704, namely $x^2+xy+xz+2y^2+yz+3z^2+19w^2$,
misses $10,38,57$ and represents all other positive integers.
All escalations by $38$ represent $57$ and so they are coprime-universal.
\end{example}

No other basic form requires this step when $p=5$. Furthermore,
when $p\geq 7$ no basic form requires this step. We note, however,
that $203$ and $290$ appear as truants of five-dimensional escalators.
Indeed, we always have the form $x^2+xz+2y^2+3yz+5z^2+29w^2$,
which misses $145,203,290$ and represents all other positive integers.
This form is equivalent to $L_{145}$ in \cite[(4)]{BH} and there exist
rank-five escalators of it with truant $203$ and $290$,
as showed in \cite[(3) and p.14]{BH}.
This concludes the proof for case (ii).

In case (iii) we must examine the auxiliary lattices,
looking again for possible squarefree integers not represented by $Q$.
When $p=5$, out of 1188 auxiliary forms, we find nine to take care of;
apart from the elements in $S_p$, their squarefree exceptions are:
 91 (Forms 1142,1182),
101 (Forms 823,825),
122 (Form 682),
154 (Forms 762,810),
158 (Form 1114) and
394 (Form 388).
Escalating all of these forms by their common truant $14$,
we find that every five-dimensional escalator so contructed
represents these exceptions and is therefore coprime-universal.

When $p=7$, out of 486 auxiliary forms, we find zero to take care of
(all of their exceptions are either divisible by 7, or non-squarefree,
or already in $S_7$). The same is true for $p=11,\dots,37$.
This concludes the proof of Theorem \ref{theo:MAIN}.

\section{Proof of Theorem \ref{theo:MAIN1}}\label{sec:classical}

We apply again the escalation process, selecting only integer-matrix escalators.
We start by escalating the the one-dimensional lattice with Gram matrix $(2)$
and associated quadratic form $x^2$. There are two classical binary escalators,
corresponding to $x^2+y^2$ and $x^2+2y^2$ (cf.~\eqref{eq:22}).

\subsubsection*{$p=5$}
Escalating by 3 and 7 respectively the binary forms above,
we obtain 12 classical ternary escalators. Two of them
are coprime-universal by Theorem~\ref{S3:thm}
(they are the second and third form in Table~\ref{provablyternary}).
Escalating the other $10$ produces $443$ classical quaternary lattices,
all of which represent locally everything coprime to $5$.
Applying Method zero we find that $179$ forms are universal.
The remaining 264 forms are treated with Method 1 (8 forms), Method 2 (64 forms)
or Methods 3 and 4 (64 forms).
Overall, we find that $414$ out of $443$ forms are coprime-universal;
$26$ forms have exactly one exception and will become universal
at the next escalation step.
The remaining three forms have exactly two exceptions each.
Since all the escalations by their truant represent their second exception,
we arrive again at a coprime-universal lattice in one step.

\subsubsection*{$p=7$}
We escalate by 3 and 5 the binary forms above,
obtaining 9 classical ternary lattices. None of them is coprime-universal,
so we escalate again producing $381$ classical quaternary lattices.
Of these, $368$ are coprime-universal, while the remaining $13$ only miss
one integer coprime to $7$ and so they will become coprime-universal
at the next escalation step.

\section{Proof of Corollary \ref{intro:corollary}}\label{S8}

When $t\in S_1$, there exists a quadratic form representing all positive integers except $t$
by \cite[Theorem 2]{BH}. Similarly, when $t'\in S_1'$, there is a classical form
representing all positive integers except $t'$ by \cite[\S6(iii)]{Bhargava}.
Therefore, by Theorem~\ref{theo:MAIN} and Theorem~\ref{theo:MAIN1}
there is nothing to prove when $p\geq 11$.
Also, when $p=2,3$, the result was proved in \cite[Corollary 2]{DeBenedetto} and \cite[Corollary 3]{Rouse}.
Therefore, it suffices to give a proof when $t\in S_5\cup S_7$ with $t\notin S_1$
and when $t'\in S_5'\cup S_7'$ with $t'\notin S_1'$.

Now, for $p\in\{5,7\}$ and each value of $t\in S_p-S_1$,
there exists a quadratic form $Q$ representing
all positive integers coprime to $p$ and strictly less than $t$
(in other words, $Q$ has truant $t$), see Table~\ref{table:forms4truants}.
Then, the form
\[
Q(x) + (t+1)(a^2 + b^2 + c^2 + d^2) + \sum_{i=1}^{t-1} (t+1+i) x_i^2
\]
can be easily seen (by Lagrange’s four square theorem) to represent
every positive integer larger than $t$.
This proves the corollary for non-classical forms.
As for the classical case, the argument is the same, but one selects
$Q$ to be classical in Table~\ref{table:forms4truants}.

\section{Proof of Corollary \ref{intro:cor2}}\label{S81}

\subsection{Classical Forms}

Let $Q$ be a classical quadratic form representing
every positive integer coprime to $5$ less than $t'_{\max}=58$.
Then $Q$ represents a classical escalator lattice among the ones
we found (when working on $p=5$) in Section \ref{sec:coprime}.
Of those, the only one missing $58$ is the basic quaternary form
number $1313$, namely
\[
x^2+2y^2+7z^2+26w^2+4xz.
\]
This form was analyzed with Method 2 and fails to represent
only the two integers $10$ and~$58$.
The desired result now follows by remembering that
we are focusing on integers coprime to~$5$.
Similarly, let $Q$ be a classical quadratic form
representing every positive integer coprime to $7$ less than $t'_{\max}=78$.
Then $Q$ represents a classical escalator lattice from Section~\ref{sec:coprime}.
Only two of those miss $78$: the basic quaternary forms number $226$, namely
\[
x^2+2y^2+5z^2+30w^2+4xz+4xw+4zw,
\]
and number $3267$, namely
\[
x^2+2y^2+5z^2+30w^2+2xz+4xw+4zw.
\]
Both of these forms were analyzed with Method 2 and fail to represent
only the integers $14$ and $78$.
The desired result now follows by remembering that
we are focusing on integers coprime to $7$.
When $p\geq 11$, the result follows from \cite[(iv), Section 6]{Bhargava}.
When $p=2,3$, see \cite{Rouse,DeBenedetto}.

\subsection{Non-classical forms}

Let $5\leq p\leq 37$, $p\neq 29$. We have $t_{\max}=203$ when $p=5$
and $t_{\max}=290$ when $p\neq 5$.
Let $Q$ be a quadratic form representing every positive integer
coprime to $p$ less than $t_{\max}$.
Then $Q$ represents an escalator lattice from Section~\ref{sec:coprime}.
If $Q$ represents a provably coprime-universal ternary escalator
(that is, a ternary form from Table~\ref{provablyternary}), then we are done.
Similarly, if $Q$ represents a ternary escalator from Table~\ref{table:ternaryforms},
then we are done conditionally on Conjecture~\ref{intro:conjecture}.
Assuming that $Q$ represents a quaternary escalator,
we find that in all cases the only quaternary escalator not representing $t_{\max}$ is
\[
x^2+2y^2+7z^2+29w^2+3xz+3yz,
\]
which is equivalent to $L_{145}$ in \cite[(4)]{BH} and
misses the three integers $145,203$ and $290$.
After possibly escalating again, we find a sublattice of $Q$ missing
$t_{\max}$ and representing every other positive integer coprime to $p$.

Incidentally, we note that when $p=3$ the result for non-classical forms appears to be missing
in \cite{DeBenedetto}. Yet, the same behaviour as above occurs: if $Q$ represents
all positive integers coprime to $3$ less than $t_{\max}=290$, then
either it represents a coprime-universal ternary escalator from \cite[Theorem 4]{DeBenedetto},
or it represents one of $8894$ quaternary escalators, among which exactly
one, equivalent to $L_{145}$, misses $290$.

When $p=29$, we have $t_{\max}=110$. Let $Q$ be a quadratic form
representing every positive integer coprime to $29$ less than $t_{\max}$.
Then $Q$ represents an escalator lattice from Section~\ref{sec:coprime}.
If $Q$ represents the form $Q_{29}$ appearing in Table~\ref{table:ternaryforms}
then we are done conditionally on Conjecture~\ref{intro:conjecture}.
Assuming that $Q$ represents a quaternary escalator, we find that
the only one missing $110$ is the basic quaternary form number $5626$, namely
\[
x^2+y^2+3z^2+xz+22w^2.
\]
This form was analyzed with Method 2 and it fails to represent
only the integer $110$, from which the corollary follows.

\section{Proof of Theorem \ref{theo:partial1}}\label{sec:elementary}

In this section we prove Theorem \ref{theo:partial1} by applying
an elementary technique previously used by Kaplansky
\cite[Section 4]{Kaplansky} and a method of embedded genera,
which proved useful in Bhargava's proof of the 15-theorem
\cite[p.~30]{Bhargava}.

The first technique requires auxiliary regular ternary forms.
Given a quadratic form $Q$ and an integer $N$ which we want to prove is represented by $Q$,
we show that there exist integers $k,d\geq 1$ and a $3\times 3$ integer matrix $M$ such that
\[
kQ\left(\frac{1}{d}Mv\right)=L(v),
\]
where $v=(x,y,z)$ and $L$ is a regular form representing $kN$.
Then, among all vectors for which $L(v)=kN$, we show that we can choose one such that
$Mv\equiv 0\pmod{d}$, obtaining that $kQ$ represents $kN$, i.e.~$Q$ represents $N$.
In fact, it suffices that $gMh v\equiv 0\pmod{d}$ for some automorph $g$ of $Q$ and $h$ of $L$.

This approach works for the forms $Q_{\twoa},Q_{\twob},Q_{\twoc},Q_{\fivea}$ and $Q_{\fiveb}$.
These forms are in a genus with more than one form and are not regular.
For instance, the genus of $Q_{\fivea}$ has size $2$ and a representative of the other class is
$R=2x^2+3y^2+3z^2-xy+2xz+2yz$.
Altogether, the genus $\{Q_{\fivea},R\}$ represents every integer not of the form $5^{1+2s}m$ with
$m \equiv \pm 1 \pmod 5$ and so, in particular, every integer coprime to~$5$.
Conjecture~\ref{intro:conjecture} predicts that $Q_{\fivea}$ itself represents such integers.
Note that $Q_{\fivea}$ does not represent $10$ and $R$ does not represent $1$
(but it seems to represent every other positive integer coprime to $5$).
Similar considerations apply to the other forms in Table~\ref{table:ternaryforms}.

In the second method we show that a form $Q$ contains, as proper
sublattices, the representatives of a whole genus; as a consequence,
we deduce that $Q$ represents itself all the integers represented
by such genus.
We use this approach on $Q_{\fiveb},Q_{23},Q_{29}$ and $Q_{31}$.

\subsection{Proof of Theorem \ref{theo:partial1}}
Let $N$ be a squarefree positive integer with $5\nmid N$ and $N \equiv 2 \pmod 3$.
We have
\begin{equation}\label{eq:form1}
Q_{\fivea}\left( x+\frac{y+z}{3},-z,\frac{y+z}{3}\right)=x^2+y^2+2z^2+xy+xz+yz.
\end{equation}
Let $L(x, y, z)$ denote the form on the right; $L$ is regular
(it is in genus $1$) and represents every integer coprime to $5$.
Therefore, there are $x, y, z \in \mathbb{Z}$ such 
\begin{equation}\label{eq:cond1}
L(x,y,z)=N.
\end{equation}
If we can show that $x,y,z$ can be picked so to have $y + z \equiv 0 \pmod 3$,
then \eqref{eq:form1} gives a representation of $N$ by $Q_{\fivea}$.
Observe that, if $(x, y, z)$ satisfies \eqref{eq:cond1},
then we also have the representations $(y,x,z)$ and $(x,-x-y-z,z)$.
Hence, it suffices to show that there is a representation
satisfying at least one of the congruences
\begin{equation}\label{eq:cond2}
x + y \equiv 0 \pmod 3,\quad
x + z \equiv 0 \pmod 3,\quad
y + z \equiv 0 \pmod 3.
\end{equation}
We have
\[
x^2 + y^2 + (x + y + z)^2 \equiv 2L(x, y, z) \equiv 2N \equiv 1 \pmod 3,
\]
which forces
\[
(x^2, y^2, (x + y + z)^2) \in \{(1, 0, 0), (0, 1, 0), (0, 0, 1)\} \pmod 3
\]
and in all cases at least one of the congruences in \eqref{eq:cond2} is satisfied.

Assume now $N \equiv 0 \pmod 3$. We have
\begin{equation}\label{1803:eq001}
2Q_{\fivea}\left(\frac{x+y-4z}{3},\frac{x+z}{2},\frac{-x+2y+z}{6} \right) = x^2+2y^2+5z^2.
\end{equation}
The form on the right is regular and represents all integers coprime to $5$,
therefore it represents $2N$.
Moreover, given any representation $(x,y,z)$ we have
\begin{equation}\label{1803:eq002}
x+z\equiv x^2 + 2y^2 + 5z^2 \equiv 2N \equiv 0 \pmod 2.
\end{equation}
We also have
\[
x^2 + 2y^2 + 5z^2 \equiv 2N \equiv 0 \pmod 3,
\]
which forces one variable to vanish and the other two to be non-zero modulo~$3$
(for if all are zero then $9|N$).
Up to changing sign to one of the non-zero variables, we
have $x-2y-z \equiv 0 \pmod 3 $.
Combining this with \eqref{1803:eq001} and \eqref{1803:eq002},
we deduce that $Q_{\fivea}$ represents $N$, as claimed.

Moving to the form $Q_{\fiveb}$, assume $N$ is a squarefree positive integer coprime to $5$
and divisible by $11$, say $N=11N_0$ with $(N_0,11)=1$. We have
\begin{equation}\label{1803:eq003}
11Q_{\fiveb}\left(\frac{3x+y+3z}{11},\frac{x+y-2z}{11},\frac{y+z}{11}\right)
=
x^2+y^2+2z^2+xy+xz+yz.
\end{equation}
The form on the right is regular and represents every integer coprime to $5$.
In particular, it represents $N_0$ by means of some integer vector $(x_0,y_0,z_0)$.
On taking $(x,y,z)=(11x_0,11y_0,11z_0)$ in \eqref{1803:eq003},
we deduce that $Q_{\fiveb}$ represents $N$.

When $p=2$, we use the auxiliary form $L(x,y,z)=x^2+y^2+2z^2$, which represents
all positive odd integers. We have
\begin{equation}\label{1705:eq001}
19 Q_{\twoa}\left(\frac{x+y-6z}{19},\frac{3x+z}{19},-\frac{2y}{19}\right) = L(x,y,z).
\end{equation}
If $N=19N_0$ with $N_0$ odd, then $L(x_0,y_0,z_0)=N_0$ for some $(x_0,y_0,z_0)\in\ZZ^3$.
By \eqref{1705:eq001} it follows that $Q_{\twoa}(19x_0,19y_0,19z_0)=19N_0=N$.
For $Q_{\twob}$ and $Q_{\twoc}$ the argument is the same but we use the identities
\[
\begin{split}
31Q_{\twob}\left(\frac{4x+6z}{31},\frac{-3x-y+2z}{31},\frac{x-2y-z}{31}\right) &=L(x,y,z)\\
37Q_{\twoc}\left(\frac{5x+3y-4z}{37},\frac{-3x+2y-z}{37},\frac{y+3z}{37}\right) &=L(x,y,z).
\end{split}
\]

As for the method of embedded genera, we have
\[
\begin{split}
Q_{\fivea}(-x+y+z,-x-2z,-y+z) &= 3x^2+7y^2+11z^2+2xy+2xz-6yz,\\
Q_{\fivea}(-x+y-z,-x-y-2z,2z) &= 3x^2+3y^2+23z^2+2xy+2xz+2yz.
\end{split}
\]
The two forms on the right form a genus and altogether they represent
every squarefree positive integer $N$ coprime to $5$ with $N\equiv 3\pmod{4}$.
Similarly, we have
\[
\begin{split}
Q_{\fiveb}(2x+y+z,-2z,y) &= 4x^2 + 8y^2 + 9z^2 + 4xy + 4xz,\\
Q_{\fiveb}(x,2y,2z) &= x^2 + 8y^2 + 28z^2 +4yz.
\end{split}
\]
The two forms on the right form a genus, which represents
every squarefree positive integer $N$ coprime to $5$ with $N\equiv 1\pmod{4}$.

When $p=23$ we find that
$Q_{23}(2x+2y+z,x+2z,-x) = 4x^2 + 4y^2 + 9z^2 + 4xy + 4xz + 4yz = R$, say.
The genus of $R$ has size three, it is fully contained in $Q_{23}$
and it represents every squarefree integer $n$
coprime to $23$ and with $n\equiv 1\pmod{4}$.

When $p=29$ we find that
$Q_{29}(-x+2y+2z,-x-2z,x+y) = 4x^2 + 11y^2 + 12z^2 +4xy + 4yz=R$, say.
Again $R$ is in a genus of size three which is fully contained in
$Q_{29}$ and represents all squarefree integers coprime to $29$ in the class $3\pmod{8}$.

Finally, when $p=31$ we have
$Q_{31}(2x+y+2z,y-z,-y-z) = 4x^2 + 5y^2 + 8z^2 +4xz +4yz=R$,
which is in a genus of size three fully contained in $Q_{31}$
and representing all squarefree integers coprime to $31$
in the class $1\pmod{4}$.\qed

\begin{remark}\label{sec:elementary:rmk}
We did not find other auxiliary forms useful to apply Kaplansky's method.
On the other hand, the method of embedded genera provides more arithmetic progressions
than the ones stated in Theorem \ref{theo:partial1}, involving primes not dividing
the discriminant of the forms considered. For instance, $Q_{23}$ represents
a genus of size 6 (associated to the form $4x^2+7y^2+9z^2+2xy+6yz$)
which represents squarefree integers $n$ coprime to $23$
with $n\equiv 0,1\pmod{3}$ and a genus of size 7
(corresponding to $10x^2+12y^2+13z^2+2xy+10xz+8yz$)
giving the congruence
$n\equiv 0,2,3,5\pmod{7}$. Similarly, $Q_{29}$ represents the genus
of $9x^2+16y^2+19z^2+6xy+6xz+8yz$, of size $13$, giving the congruence $n\equiv 1\pmod{3}$ and
the genus of $19x^2+19y^2+19z^2+8xy+2xz+2yz$, of size 15, which gives $n\not\equiv 2,8\pmod{15}$.
\end{remark}

\section{Proof of Theorem \ref{thm:GRH}}\label{sec:GRH}

Denote by $Q_{\fivea},Q_{\fiveb}$ and $Q_{23},Q_{29},Q_{31}$ the forms appearing
in Conjecture \ref{intro:conjecture}, see Table \ref{table:ternaryforms}.
We prove here that they are coprime-universal under GRH.

\subsection{Strategy of proof}\label{sec:GRH.1}
Let $Q$ be any of our five ternary forms.
The associated theta series $\theta_Q$ decomposes into
Eisenstein series and cuspidal part, say
\begin{equation}\label{0303:eq002}
\theta_Q(z) = E(z) + C(z) = \sum_{n\geq 0} a_E(n)q^n + \sum_{n\geq 1} a_C(n)q^n.
\end{equation}
In essence, we are going to show that $a_E(n)$ dominates on $|a_C(n)|$,
in the same spirit as we did in Section \ref{S5}.
To do so, we exploit the fact that $a_E(n)$ and $a_C^2(n)$
are related to special values of $L$-functions.
Assuming GRH, these can be bounded explicitly
and produce the desired result as soon as $n$
is larger than $10^{9}$ roughly. For $n\leq 10^{9}$
we check with a computer that the result holds.

The difficulty with ternary forms is that $\theta_Q$
is a modular form of half-integral weight, where
much less is known compared to integral weight:
the available unconditional estimates on Fourier coefficients of
weight $\frac{3}{2}$ modular forms would only give
the desired conclusion when $n$ is greater than
a threshold of magnitude possibly as large as $10^{85}$ (see \cite[p.419]{OS}),
which cannot be reached with current computers.
To circumvent this issue, we relate $a_C^2(n)$ to the
$L$-function of an integral weight form by means of
Waldspurger's theorem, and then estimate the new object in weight 2,
where we can use explicit bounds, conditional on GRH, due to Chandee \cite{chandee}.

The Eisenstein coefficients $a_E(n)$ in \eqref{0303:eq002}
are a little easier to deal with, as they are understood
by means of Dirichlet $L$-functions.
Denoting by $D$ the discriminant of $Q$, for all squarefree positive integers $n$ we have
\begin{equation}\label{0303:eq003}
a_E(n) = \frac{24}{\pi\sqrt{D}}\,\sqrt{n}\,L(1,\chi_{-D'n})\prod_{p|2D} \delta_p(n,Q)(1-p^{-2})^{-1},
\end{equation}
where $D'=4D$ for all of our forms. Note that the Kronecker character $\chi_{-D'n}$ needs not be primitive at this stage.
By Dirichlet's class number formula, \eqref{0303:eq003} can be expressed in terms of class numbers
of imaginary quadratic fields, but we do not need that here.

\subsection{Waldspurger and Shimura}
Our key tools to understand the cuspidal coefficients $a_C(n)$ in \eqref{0303:eq002}
are the Shimura lift \cite{shimura} and Waldspurger's theorem \cite{waldspurger}.
The Shimura lift allows us to move from weight $3/2$ to weight $2$ modular forms.
Then, by Waldspurger's theorem, we can restrict
our attention to a finite set of squareclasses.

Let $S_{3/2}(\Gamma_0(4N),\chi)$ denote the vector space of cusp forms
of weight $3/2$ on $\Gamma_0(4N)$ with character $\chi$.

\begin{theorem}[Shimura lift]
Suppose that $f(z)=\sum_{n\geq 1} a(n)q^n \in S_{3/2}(\Gamma_0(4N),\chi)$.
For each squarefree positive integer $t$, let
\[
\big(\mathcal{S}_t(f)\big) (z) = \sum_{n=1}^{\infty} \biggl(\sum_{d|n}\chi(d)\Bigl(\frac{-t}{d}\Bigr)a\bigl(t(n/d)^2\bigr)\biggr) q^n.
\]
Then $\mathcal{S}_t(f)$ is a modular form of weight $2$ on $\Gamma_0(2N)$, with character $\chi^2$.
Assuming that $f$ is orthogonal to all cusp forms
$\sum_{n\geq 1} \psi(n)nq^{n^2}$,
where $\psi$ is an odd Dirichlet character, then $\mathcal{S}_t(f)$ is a cusp form.
\end{theorem}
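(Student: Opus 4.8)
\emph{The statement is the classical theorem of Shimura, and the plan is to follow his original route via a Rankin--Selberg integral combined with Weil's converse theorem; a cleaner but more machinery-heavy alternative would be to realize $\mathcal{S}_t$ as a theta lift in the Niwa--Shintani style.} First I would record the multiplicative structure of the lifted coefficients. Writing $\psi_t(d)=\left(\frac{-t}{d}\right)$ and letting $A_t(n)$ denote the inner sum defining $\mathcal{S}_t(f)$, a formal Dirichlet convolution yields the factorization
\[
\sum_{n\geq 1}\frac{A_t(n)}{n^s} = L(s,\chi\psi_t)\sum_{n\geq 1}\frac{a(tn^2)}{n^s}.
\]
Thus it suffices to control the Dirichlet series $\Psi_t(s)=\sum_{n\geq 1} a(tn^2)n^{-s}$ and to multiply by the $L$-function of the (generally nonprincipal) character $\chi\psi_t$.

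The analytic input would come from a Rankin--Selberg unfolding. The idea is to integrate $f(z)$ against the unary theta series $\theta_t(z)=\sum_{m\in\ZZ} q^{tm^2}$ and a real-analytic Eisenstein series $E(z,s)$ on $\Gamma_0(4N)$, weighted so as to match the half-integral weight; unfolding $E(z,s)$ isolates the diagonal contributions and produces, up to explicit gamma and zeta factors, the series $\Psi_t(s)$. Since $E(z,s)$ has meromorphic continuation and the functional equation $s\mapsto 1-s$, these properties transfer to $\Psi_t(s)$ and hence to the full Dirichlet series of the lift. Twisting $f$ by a Dirichlet character $\omega$ (which only alters the level) and repeating the computation would give the analogous functional equation for each twisted series $\sum_{n\geq 1} A_t(n)\omega(n)n^{-s}$, with the conductor and gamma-factor bookkeeping arranged so that these are exactly the functional equations attached to weight $2$ on $\Gamma_0(2N)$ with nebentypus $\chi^2$.

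With functional equations available for sufficiently many twists, together with the required holomorphy and polynomial growth in vertical strips, Weil's converse theorem identifies $\sum_{n\geq 1} A_t(n)q^n$ as the $q$-expansion of a modular form of weight $2$ on $\Gamma_0(2N)$ with character $\chi^2$, giving the first assertion. For cuspidality, I would examine the potential Eisenstein contribution to $\mathcal{S}_t(f)$: it is controlled by the residues of $E(z,s)$ appearing in the unfolded integral, which are proportional to the Petersson inner products of $f$ against the unary theta series $\sum_{n\geq 1}\psi(n)nq^{n^2}$ with $\psi$ odd, precisely the forms that Shimura sends to Eisenstein series. Under the hypothesis that $f$ is orthogonal to all such theta series these residues vanish, so $\mathcal{S}_t(f)$ has no Eisenstein component and is therefore a cusp form.

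The hard part will be the half-integral weight bookkeeping: one must track the theta multiplier system and the metaplectic transformation laws carefully through the unfolding so that the twisted functional equations emerge with exactly the level $2N$, weight $2$, and character $\chi^2$ required to invoke the converse theorem. This is where the classical argument is most delicate, and it is the reason the theta-lift proof, where the output weight and level can be read off directly from the signature of the auxiliary quadratic space via the Weil representation, is an attractive if more technical alternative.
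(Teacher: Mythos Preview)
The paper does not prove this theorem; it is quoted as a classical result of Shimura \cite{shimura} and used as a black box in Section~\ref{sec:GRH}. So there is no proof in the paper to compare your proposal against.

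That said, your sketch is a faithful outline of Shimura's original argument: the Dirichlet-series factorization, the Rankin--Selberg integral of $f$ against a unary theta series and an Eisenstein series to produce the meromorphic continuation and functional equation of $\Psi_t(s)$, and the appeal to Weil's converse theorem after twisting are exactly the steps in \cite{shimura}. One historical caveat worth knowing: Shimura's original paper only gives the level as a divisor of some explicit multiple of $N$; the sharp level $2N$ stated here was obtained by Niwa via the theta-lift realization you mention as an alternative. Likewise, the cuspidality criterion in weight $3/2$ under orthogonality to the unary theta series $\sum \psi(n)nq^{n^2}$ is not in Shimura's paper (he assumed weight $\geq 5/2$ for unconditional cuspidality) but is a later refinement due to Cipra and Sturm. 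Your residue argument for cuspidality is the right intuition, but if you were to write this up in full you would need to invoke those later results rather than extract everything from the Rankin--Selberg integral alone.
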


One can show that if $p$ is a prime and $p\nmid 2tN$,
then $\mathcal{S}_t(f|T_{p^2})=\mathcal{S}_t(f)|T_{p}$,
where $T_m$ denotes the Hecke operator of order $m$.
In particular, if $f$ is a Hecke eigenform for $T_{p^2}$,
then $\mathcal{S}_t(f)$ is an eigenform for the same $T_{p}$,
which is the setting where Waldspurger's theorem applies.
We state it here specializing to weight $3/2$ directly.

\begin{theorem}[Waldspurger]
Suppose $f\in S_{3/2}(\Gamma_0(N),\chi)$ and $F\in S_{2}(\Gamma_0(N),\chi^2)$
verify $f|T_{p^2}=\lambda_pf$ and $F|T_{p}=\lambda_pF$ for all $p\nmid N$.
If $f(z)=\sum_{n\geq 1}a(n)q^n$, then
\[
a(n_1)^2\, L(\tfrac{1}{2},F\otimes \chi^{-1}\chi_{-n_2})\sqrt{n_2}
=
a(n_2)^2\, L(\tfrac{1}{2},F\otimes \chi^{-1}\chi_{-n_1})\sqrt{n_1}
\]
for all $n_1,n_2$ squarefree positive integers with
$\frac{n_1}{n_2}\in (\QQ_p^\times)^2$ for all $p|N$.
\end{theorem}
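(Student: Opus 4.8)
The statement is Waldspurger's theorem, so my plan is to reconstruct the standard argument, organised around the observation that the identity is equivalent to the claim that
\[
W(n) := \frac{a(n)^2}{L(\tfrac12,F\otimes\chi^{-1}\chi_{-n})\,\sqrt{n}}
\]
is constant as $n$ runs over the squarefree positive integers lying in one fixed squareclass modulo $(\QQ_p^\times)^2$ at every $p\mid N$. Cross-multiplying the asserted equality shows exactly that $W(n_1)=W(n_2)$ under the stated squareclass hypothesis, so the goal is to evaluate $W(n)$ and exhibit it as an invariant of the pair $(f,F)$ — morally the ratio of Petersson norms $\langle f,f\rangle/\langle F,F\rangle$ times a purely local constant that sees $n$ only through its squareclass at the bad primes — by computing a single global period in two different ways.

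First I would fix normalisations using the Shimura correspondence recalled above. Since $\mathcal{S}_n(f)$ is a weight-two form carrying the same Hecke eigenvalues $\lambda_p$ (for $p\nmid N$) as $F$, multiplicity one for weight-two newforms of the relevant level and character forces $\mathcal{S}_n(f)$ to be a scalar multiple of the normalised form $F$; reading off the first Fourier coefficient, which by the Shimura-lift formula equals $a(n)$, yields the clean relation $\mathcal{S}_n(f)=a(n)\,F$. This ties $a(n)$ directly to $F$ and reduces the problem to understanding how $a(n)$ is paired against the twisted central value. To produce the L-value I would use an integral representation: $L(\tfrac12,F\otimes\chi^{-1}\chi_{-n})$ is captured either as a Rankin--Selberg pairing of $F$ against a binary theta series attached to $\chi^{-1}\chi_{-n}$, or, adelically, as the square of a toric (Waldspurger) period of the automorphic form attached to $F$.

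The engine of the proof is then the theta correspondence for the dual pair $(\widetilde{\mathrm{SL}}_2,\mathrm{PGL}_2)$, under which $f$ and $F$ correspond and the coefficient $a(n)$ is realised as a toric period of the automorphic form $\phi_F$ generating the representation attached to $F$. Applying the Rallis inner product formula — equivalently, Waldspurger's period formula — expresses $|a(n)|^2$ as the central value $L(\tfrac12,F\otimes\chi^{-1}\chi_{-n})$ times a product of local zeta integrals, which I would factor place by place. For $p\nmid N$ the local factor is unramified and, by the matching of the eigenvalues $\lambda_p$ from the previous step, reassembles precisely into the Euler product of the L-function together with the $\sqrt{n}$; these contributions therefore cancel in $W(n)$. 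For $p\mid N$, for $p=2$, and at the archimedean place the local integral is a ramified computation whose value depends on $n$ only through its class in $\QQ_p^\times/(\QQ_p^\times)^2$. Hence $W(n)$ is constant on each squareclass at the primes dividing $N$, which is exactly the hypothesis $n_1/n_2\in(\QQ_p^\times)^2$ for all $p\mid N$.

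The main obstacle is the local harmonic analysis: the explicit evaluation, and above all the nonvanishing, of the ramified local zeta integrals at the primes dividing $N$ and at $p=2$, together with the global-to-local factorisation furnished by the Rallis inner product formula and the compatibility of the Shimura lift with local theta correspondences — this is where the full force of Waldspurger's local theory is needed and where a self-contained treatment would be longest. A secondary technical point, which I would dispatch first, is to verify that $f$ is orthogonal to the unary theta series $\sum_{n\geq1}\psi(n)\,n\,q^{n^2}$, so that $\mathcal{S}_n(f)$ is genuinely cuspidal and the multiplicity-one normalisation is legitimate; for the forms of Conjecture~\ref{intro:conjecture} this holds because the relevant $\theta_Q$ has no such unary component. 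A classical alternative avoiding the adelic machinery, following Kohnen--Zagier, would instead unfold the Petersson product of $f$ against a Jacobi theta times a half-integral Eisenstein series; it leads to the same local integrals and the same squareclass dependence, so the essential difficulty is unchanged.
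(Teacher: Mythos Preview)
The paper does not prove this statement: it is stated as Waldspurger's theorem with a citation to \cite{waldspurger} and is used as a black box in \S\ref{sec:GRH}. There is therefore no ``paper's own proof'' to compare against.

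Your outline is a reasonable high-level sketch of the standard approach (theta correspondence for $(\widetilde{\mathrm{SL}}_2,\mathrm{PGL}_2)$, Rallis inner product / Waldspurger period formula, local factorisation with unramified places cancelling and ramified places depending only on squareclass). But it is a sketch, not a proof: the substantive content of Waldspurger's theorem lies precisely in the local harmonic analysis you flag as ``the main obstacle'', and you have not carried any of it out. Also note a small slip in your secondary technical point: you speak of verifying orthogonality ``for the forms of Conjecture~\ref{intro:conjecture}'', but the theorem as stated is about an arbitrary Hecke eigenform $f$, not specifically about the cuspidal parts arising from the ternary forms in the paper; that orthogonality is a hypothesis one must impose or check in applications, not part of the proof of the theorem itself.
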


If the $L$-values in Waldspurger's formula are non-zero,
we can solve for one between $a(n_1)$ or $a(n_2)$ obtaining
\begin{equation}\label{0303:eq001}
a(n)^2 = A_m \sqrt{n}\, L(\tfrac{1}{2},F\otimes \chi^{-1}\chi_{-n}),
\end{equation}
where
\[
A_m = \frac{a(m)^2}{ L(\tfrac{1}{2},F\otimes \chi^{-1}\chi_{-m})\,\sqrt{m}},
\]
for all integers $n$ that are in the same squareclass as $m$ in $(\QQ_p/(\QQ_p^\times)^2)$ for all $p|N$.
In other words, \eqref{0303:eq001} shows that Fourier coefficients of weight-$3/2$
modular forms can be identified with special values of $L$-functions of modular forms in weight 2.

\subsection{Proof of Theorem \ref{thm:GRH}}\label{sec:GRH.3}
We explain in some detail the proof for $Q_{\fiveb}$ and~$Q_{23}$.
For $Q_{\fiveb}=x^2+2y^2+yz+7z^2$ we have
\[
\begin{split}
\theta_{Q_{\fiveb}}(z)
&= 1 + 2q + 2q^2 + 4q^3 + 2q^4 + 4q^6 + \cdots
\\
&=
\sum_{n\geq 0} r_{Q_{\fiveb}}(n)q^n \in M_{3/2}(\Gamma_0(220),\chi_{220}).
\end{split}
\]
The genus of $Q_{\fiveb}$ has size two and a representative
for the other class is $Q'=x^2+xy+2y^2+yz+8z^2$.
Decomposing $\theta_{Q_{\fiveb}}$ gives
\[
\begin{gathered}
E = \frac{\theta_{Q_{\fiveb}} + \theta_{Q'}}{2} = 1 + 2q + 3q^2 + 2q^3 + 4q^4 + 2q^6 + \cdots
\\
C = \theta_{Q_{\fiveb}}-E = -q^2 + 2q^3 - 2q^4 + 2q^6 + \cdots.
\end{gathered}
\]
We use the Shimura lift $\mathcal{S}_3:S_{3/2}(\Gamma_0(220),\chi_{220})\to M_2(\Gamma_0(110))$
and note that $C$ is a Hecke eigenform orthogonal to $\ker(\mathcal{S}_3)$.
Therefore, $\mathcal{S}_3(C)$ is a newform with the same system of eigenvalues as $C$,
so Waldspurger's theorem applies.
To be precise, $\mathcal{S}_3(C)=2F$, where
\[
F(z) = q + q^2 -q^4 +q^5-3q^8-3q^9+\cdots \in S_{2}(\Gamma_0(55)),
\]
the newform associated with the elliptic curve
$E: y^2+xy=x^3-x^2-4x+3$.

By Theorem \ref{theo:partial1}, we need to prove that $Q_{\fiveb}$
represents all squarefree positive integers $n$ coprime to $55$ such that $n\not\equiv 1\pmod{4}$.
For each triple
$(n_1,n_2,n_3)\in(\QQ_2^\times/(\QQ_2^\times)^2)\times(\QQ_5^\times/(\QQ_5^\times)^2)\times(\QQ_{11}^\times/(\QQ_{11}^\times)^2)$
with $n_1\not\equiv 1\pmod{4}$ and $\mathrm{ord}_5(n_2)=\mathrm{ord}_{11}(n_3)=0$, we compute a constant $d>0$ such that if $n$ is a positive squarefree integer with $n/n_1\in(\QQ_2^\times)^2$,
$n/n_2\in(\QQ_5^\times)^2$ and $n/n_3\in(\QQ_{11}^\times)^2$, we have
by \eqref{0303:eq003} and \eqref{0303:eq001}
\begin{equation}\label{1905:eq001}
r_{Q_{\fiveb}}(n) = \frac{\sqrt{220n}}{4\pi} L(1,\chi_{-220n}) \pm d n^{1/4} \sqrt{L(1/2,F\otimes\chi_{-220n})}.
\end{equation}
Note that $\chi_{-220n}$ is primitive for each squareclass we consider.
Also, rather pleasantly, we have the same $d$ in all cases, namely $d\approx 2.0757055$.
If $r_{Q_{\fiveb}}(n)=0$, we get that
\[
\frac{\sqrt{L(1/2,F\otimes\chi_{-220n})}}{L(1,\chi_{-220n})} \geq \frac{\sqrt{55}}{2\pi d} n^{1/4}.
\]
On the other hand, Chandee's theorems (which are conditional on GRH) give
\[
\frac{\sqrt{L(1/2,F\otimes\chi_{-220n})}}{L(1,\chi_{-220n})} \leq 6.2980085\, n^{0.114464}
\]
Therefore, we get a contradiction if $n\geq 50752966$.
Below this number all integers coprime to $5$ are represented,
which proves the theorem for $Q_{\fiveb}$.

Take now $Q_{23}=x^2+2xz+2y^2+3yz+5z^2$. We have
\[
\begin{split}
\theta_{Q_{23}}(z)
&= 1 + 2q + 2q^2 + 6q^3 + 8q^4 + \cdots\\
&=\sum_{n\geq 0} r_{Q_{23}}(n) q^n \in M_{3/2}(\Gamma_0(92),\chi_{92}).
\end{split}
\]
The genus of $Q_{23}$ has size 3 and the other forms are
$Q'=x^2+y^2+6z^2+xz$ and $Q''=x^2+y^2+8z^2+xy+xz$. We have
\[
\begin{gathered}
E = \frac{6}{11}\theta_{Q_{23}} + \frac{3}{11}\theta_{Q'} + \frac{2}{11}\theta_{Q''}
= 1 + \frac{36}{11}q + \frac{24}{11}q^2 + \frac{48}{11}q^3 + \frac{72}{11}q^4 + \cdots
\\
C = \theta_{Q_{23}}-E = -\frac{14}{11}q - \frac{2}{11}q^2 + \frac{18}{11}q^3 + \frac{16}{11}q^4 -\frac{4}{11}q^5 + \cdots.
\end{gathered}
\]
The Shimura lift $\mathcal{S}_3:S_{3/2}(\Gamma_0(92),\chi_{92})\to M_2(\Gamma_0(46))$
is injective and we find that
\[
C = \frac{45+3\sqrt{5}}{55} g_{+} + \frac{45-3\sqrt{5}}{55} g_{-},
\]
where $g_{+}$ and $g_{-}$ are Hecke eigenforms with Fourier expansion
\[
g_{\pm}=
\frac{-2\mp\sqrt{5}}{3}q + \frac{-1\pm\sqrt{5}}{6}q^2
+ q^3 + \frac{5\pm\sqrt{5}}{6}q^4 + \frac{-1\pm\sqrt{5}}{3}q^5 + \cdots,
\]
which are mapped by $\mathcal{S}_3$ to the newforms
\[
F^{\pm} =  q + \frac{-1 \mp\sqrt{5}}{2}q^2 + \sqrt{5}q^3 + \frac{-1\pm\sqrt{5}}{2}q^4 +(-1 \mp\sqrt{5})q^5 + \cdots \in S_2(\Gamma_0(23)).
\]
By Theorem \ref{theo:partial1}, we need to prove that $Q_{23}$
represents all squarefree positive integers $n$ coprime to $23$ such that $n\not\equiv 1\pmod{4}$.
For each pair $(n_1,n_2)\in(\QQ_2^\times/(\QQ_2^\times)^2)\times(\QQ_{23}^\times/(\QQ_{23}^\times)^2)$
with $n_1\not\equiv 1\pmod{4}$ and $\mathrm{ord}_{23}(n_2)=0$, 
we compute constants $d_1,d_2>0$ such that if $n$ is a positive squarefree integer with $n/n_1\in(\QQ_2^\times)^2$,
$n/n_2\in(\QQ_{23}^\times)^2$, we have by \eqref{0303:eq003} and \eqref{0303:eq001}
\[
\begin{split}
r_{Q_{23}}(n) = \frac{6\sqrt{92n}}{11\pi} L(1,\chi_{-92n})
&\pm d_1 n^{1/4}\sqrt{L(1/2,F^{+}\otimes\chi_{-92n})}\\
&\pm d_2 n^{1/4}\sqrt{L(1/2,F^{-}\otimes\chi_{-92n})}.
\end{split}
\]
Note that $\chi_{-92n}$ is primitive for each squareclass we consider.
Also, we find that $d_1$ and $d_2$ are the same in all cases,
namely $d_1\approx 0.57564909$ and $d_2\approx 0.25034922$.
If $r_{Q_{23}}(n)=0$, then
\[
d_1 \frac{\sqrt{L(1/2,F^{+}\otimes\chi_{-92n})}}{L(1,\chi_{-92n})}
+
d_2 \frac{\sqrt{L(1/2,F^{-}\otimes\chi_{-92n})}}{L(1,\chi_{-92n})}
\geq
\frac{6\sqrt{92}}{11\pi} n^{1/4}.
\]
Applying Chandee's theorems, we get
\[
\frac{\sqrt{L(1/2,F^{+}\otimes\chi_{-92n})}}{L(1,\chi_{-92n})}\leq 14.930625 \, n^{0.114464}
\]
and
\[
\frac{\sqrt{L(1/2,F^{-}\otimes\chi_{-92n})}}{L(1,\chi_{-92n})}\leq 9.146851 \, n^{0.114464}.
\]
It follows that if $r_{Q_{23}}(n)=0$ then $n\leq 1036286$.
In this range all integers coprime to $23$
are represented, which proves the theorem for $Q_{23}$.

The proof for the other forms is similar and we only sketch it.
The form $Q_{\fivea}=x^2+xz+2y^2+3yz+7z^2$ is in genus two and the other form
is $Q'=2x^2+3y^2+3z^2-xy+2xz+2yz$.
We use the Shimura lift $\mathcal{S}_1$ and find that the cuspidal
part $C$ of $\theta_Q\in S_{3/2}(\Gamma_0(180),\chi_{180})$
is a Hecke eigenform orthogonal to $\ker(\mathcal{S}_1)$,
so that $F=\mathcal{S}_1(C)\in S_2(\Gamma_0(45))$ has the same Hecke eigenvalues as $C$.
For each squarefree positive integer $n$ coprime to $5$,
satisfying $n\equiv 1\pmod{3}$ and $n\not\equiv 3\pmod{4}$,
there is $d>0$, depending only on the squareclass of $n$ in
$\QQ_2^\times/(\QQ_2^\times)^2\times\QQ_{3}^\times/(\QQ_{3}^\times)^2\times\QQ_{5}^\times/(\QQ_{5}^\times)^2$,
such that
\[
r_{Q_{\fivea}}(n) = \frac{\sqrt{180n}}{4\pi} L(1,\chi_{-180n}) \pm d n^{1/4} \sqrt{L(1/2,F\otimes\chi_{-180n})}
\]
We find $d\leq 1.1177145$. After correcting the Euler factors at $p=3$ to make the
character primitive we apply Chandee's theorems and deduce that if $r_{Q_{\fivea}}(n)=0$ then
\[
\frac{\sqrt{180}}{4\pi} n^{1/4} \leq 9.9318745\,d\,n^{0.114464} \leq 11.1010002 n^{0.114464}.
\]
which forces $n\leq 31846860$.
In this range we verify that all integers coprime to $5$ are represented,
concluding the proof for $Q_{\fivea}$.

The form $Q_{29}=x^2+xz+2y^2+3yz+5z^2$ is in genus three and the other forms are
$Q'=x^2+3y^2+3z^2+xy+2yz$ and $Q''=x^2+y^2+10z^2+xy+xz+yz$.
We use the Shimura lift $\mathcal{S}=\mathcal{S}_3+\mathcal{S}_7$, which is injective.
The cuspidal part $C$ of the theta series
$\theta_{Q_{29}}\in S_{3/2}(\Gamma_0(116),\chi_{116})$
is a linear combination of two Hecke eigenforms
which are mapped by $\mathcal{S}$ to two
eigenforms $F^{\pm}$ in $S_{2}(\Gamma_0(58))$.
For each squarefree positive integer $n$ with $29\nmid n$ and $n\not\equiv 3\pmod{8}$
we compute constants $d_1,d_2>0$, depending only on the squareclass of $n$
in $\QQ_2^\times/(\QQ_2^\times)^2$ and $\QQ_{29}^\times/(\QQ_{29}^\times)^2$,
such that
\[
\begin{split}
r_{Q_{29}}(n) = \frac{4\delta_2\sqrt{116n}}{7\pi} L(1,\chi_{-116n})
&\pm d_1 n^{1/4}\sqrt{L(1/2,F^{+}\otimes\chi_{-116n})}\\
&\pm d_2 n^{1/4}\sqrt{L(1/2,F^{-}\otimes\chi_{-116n})},
\end{split}
\]
where $\delta_2\geq 3/4$, $d_1\leq 0.92136892$ and $d_2\leq 0.4644352$.
We apply Chandee's theorems and deduce that if $r_{Q_{29}}(n)=0$ then
\[
\frac{3\sqrt{116}}{7\pi}n^{1/4} \leq (9.420631\, d_1 + 10.3088365\, d_2) n^{0.114464} \leq  13.4676627\, n^{0.114464},
\]
which forces $n\leq 12564515$.
In this range we verify that all integers coprime to $29$ are represented,
concluding the proof for $Q_{29}$.

The form $Q_{31}=x^2+2xz+2y^2+yz+5z^2$ is genus three and the other forms are
$Q'=x^2+2y^2+5z^2+xy+xz-yz$ and $Q''=x^2+y^2+8z^2+yz$.
The cuspidal part $C$ of the theta series
$\theta_{Q_{31}}\in S_{3/2}(\Gamma_0(124),\chi_{124})$
is a linear combination of two forms
whose images under the Shimura lift $\mathcal{S}_3$ (which is injective)
are the two Galois conjugate Hecke eigenforms $F^{\pm}$
in the two-dimensional space $S_{2}(\Gamma_0(31))$.
For each squarefree positive integer $n$ with $31\nmid n$ and $n\not\equiv 1\pmod{4}$,
we compute constants $d_1,d_2>0$, depending only on the squareclass of $n$
in $\QQ_2^\times/(\QQ_2^\times)^2$ and $\QQ_{31}^\times/(\QQ_{31}^\times)^2$,
such that
\[
\begin{split}
r_{Q_{31}}(n) = \frac{2\sqrt{124n}}{5\pi} L(1,\chi_{-124n})
&\pm d_1 n^{1/4}\sqrt{L(1/2,F^{+}\otimes\chi_{-124n})}\\
&\pm d_2 n^{1/4}\sqrt{L(1/2,F^{-}\otimes\chi_{-124n})}.
\end{split}
\]
We find that $d_1$ and $d_2$ are the same in all cases,
namely $d_1\approx 0.2542708$ and $d_2\approx 0.9177556$.
Combining this with Chandee's theorems we deduce that if $r_{Q_{31}}(n)=0$ then
\[
\frac{2\sqrt{124}}{5\pi}n^{1/4} \leq (13.6987345\, d_1 + 9.7823319\, d_2) n^{0.114464} \leq 12.460978 \, n^{0.114464},
\]
which forces $n\leq 9213750$.
In this range all integers coprime to $31$ are represented
and therefore the theorem is proved. \qed

\section{Analysis in the case of arithmetic progressions}\label{sec:stat}

There is some literature on quadratic forms representing arithmetic progressions,
see e.g.~\cite{H-K,Oh2,P-W,WuSun}.
In this section we discuss quadratic forms representing a progression,
say $m\mathbb{N}+k$, from a numerical viewpoint.
Note that the case $(m,k)=(1,0)$ goes back to Barghava and Hanke's 290-theorem,
while $(m,k)=(2,1)$ gives Rouse's 451-theorem.
Both results are computationally heavy and one reason
is the escalation process.

Say that we escalate a $(n-1)\times(n-1)$ matrix $A=(a_{ij})$ with a new truant $t$
and, for simplicity, assume $k=0$, so that $m|t$.
All possible $n\times n$ escalations of $A$ are obtained by varying the coefficients
$a_{in}$ and checking that we get a positive definite matrix.
By Cauchy-Schwarz, we approximately need to check
$|a_{in}|\leq\sqrt{mC}$, for $1\leq i\leq n-1$ and certain $C\geq 1$.
At each escalation step we need therefore to visit about
$m^{\frac{n-1}{2}}$ matrices, and completing all the steps
requires roughly $m^{\frac{n(n-1)}{4}}$ visits.
This becomes quickly unmanageable if $m$ is large.

In order to quickly get a grasp of the critical exceptions
in our arithmetic progression, we implemented a code
that selects a random branch in the escalation and stops
when it finds a matrix with no truants.
The choice of coefficients in the escalation
and the search for exceptions are made within some
prescribed height~$H$, which we took as $H=\min\{451(m-1),15000\}$.
You can find the data related to this section in \cite{publiccode}.

We tested 100 random escalations for all $m=2,\dots,100$ and $0\leq k\leq m-1$;
later we repeated with $10^3$ tosses, and finally we did $10^4$ tosses in the special
case $k=0$. The first test took overall 110h of machine time (split among eight cores),
thousand tosses required 1300h, and the last case took 150h.
Computations were done on a $11$th gen intel (R) Core(TM) i7-11657G7 @ 2.80GHz 1.69 GHz.

Let us focus on the case $k=0$ (the other cases are similar). Our numerics
produced a list $T_m$ of multiples of $m$. After dividing by $m$, we compared these
sets with the ones obtained in Theorem~\ref{theo:MAIN}.
We summarize our findings in the following observations:

\begin{itemize}[leftmargin=20pt]

   \item[\textsf{(i)}] For all $m=2,\dots,100$, we have $|T_m|\leq 57$ (occurring when $m=6$)
                       and $\max\{T_m\}\leq 383m$ (occurring when $m=12$).
                       However, we have $|T_m|\leq 36$ when $m\geq 34$,
                       suggesting that when $451(m-1)>15000$
                       the truncation at $H$ causes an `accuracy loss'.

   \item[\textsf{(ii)}] After the first $10^3$ tosses, we find very few new exceptions;
                        in other words, some elements in $T_m$ are found many times,
                        while others occur very rarely.
                        In general, we frequently obtain small truants,
                        whereas large ones appear less often.

   \item[\textsf{(iii)}] Let $S_\infty$ be the union of the sets in Table~\ref{table:Sp}
                         and let $T_\infty=\frac{T_2}{2}\cup\cdots\cup \frac{T_{100}}{100}$.
                         We have $|S_\infty|=68$ and we find $|T_\infty|=104$
                         and $|T_\infty\cap S_\infty|=48$.

\end{itemize}

Elaborating on \textsf{(i)} and \textsf{(ii)} and varying $k\neq 0$,
we also considered the quantities $T_m(k;10^2)$ and $T_m(k;10^3)$
corresponding to the truants in the arithmetic progression $k\pmod{m}$,
obtained by taking $10^2$ and $10^3$ samples, respectively.
We found that $|T_m(k;10^3)|\leq 49$ and
$6\leq |T_m(k;10^3)-T_m(k;10^2)|\leq 29$ in all cases,
indicating that $10^2$ samples are too few to get any significant indication,
and that with $10^3$ samples we are not far from the maximal cardinality we found.
On average over $1\leq k\leq m-1$, we obtained
\[
5.66 \leq \frac{1}{m-1}\sum_{k=1}^{m-1} |T_m(k;10^3)-T_m(k;10^2)| \leq 26.8
\]
and on average over both $2\leq m\leq 34$ and $1\leq k\leq m-1$ we found
\[
\frac{1}{33}\sum_{m=2}^{34} \frac{1}{m-1}\sum_{k=1}^{m-1} |T_m(k;10^3)-T_m(k;10^2)| \approx 8.65.
\]
As for the largest element, we found that $\max\{T_m(k;10^3)\}\leq 435m$
(the maximum is attained when $(m,k)=(30,5)$, when we found
the truant $13025\approx 434.16\times 30$).

It seems plausible, see Question~\ref{intro:question} in the introduction,
that there are absolute constants $C_1,C_2>0$ such that
for any arithmetic progression $k\pmod{m}$ the full critical set of exceptions $T_m(k)$
satisfies $\max\{T_m(k)\}\leq C_1m$ and $|T_m(k)|\leq C_2$, although
the evidence in favour of this (or against it) is limited.


\addresseshere

\clearpage

\appendix\section*{Appendix: quadratic forms with given truant}

For every truant $t$ appearing in $S_5\cup S_7$ with $t\notin S_1$
(see Table~\ref{table:Sp}) and every $t'\in S_5'\cup S_7'$
with $t'\notin S_1'$ (see Table~\ref{table:SpPrime}),
we list a quadratic form representing all positive integers
less than $t$ coprime to $p$, where $p\in\{5,7\}$.
For the other primes similar quadratic forms have already been found
and we refer to \cite{Bhargava,BH,DeBenedetto,Rouse}.
On the left is indicated for which primes the truant $t$ occurs.

\begin{table}[!h]
\small
\renewcommand\arraystretch{1.25}
\begin{tabular}{r|>{\centering$}p{80mm}<{$}|>{$}c<{$}} 
\hline
\phantom{xxx}$p$ &\text{Form} & \text{Truant} \\
\hline
$5$ & x^2 +2y^2 -2xz -6yz + 7z^2 & 13\\
\hline
$5$ & x^2 +2y^2 + 2yw + 7z^2 + 2zw + 14w^2 & 21\\
\hline
$7$ & x^2 + 2y^2 - 2yz + 4yw + 5z^2 + 15w^2 & 23 \\
\hline
$5$ & x^2 -4xz + 2y^2 +7z^2 & 26\\
\hline
$5$ & x^2 - 2xz + 4xw + 2y^2 + 7z^2 - 4zw + 13w^2 & 29\\
\hline
$7$ & x^2 - 2xz + 2y^2 + 5z^2 & 30\\
\hline
$7$ & x^2 + 4xw + 2y^2 - 2yz + 5z^2 + 4zw + 15w^2 & 31\\
\hline
$5$ & x^2 + xy + 2y^2 - xz - yz + 3z^2 + 19w^2 &  38\\
\hline
$5$ & x^2 + 2y^2 - 2xz - yz + 7z^2 + 3xw + 2yw - zw + 14w^2 &  39\\
$7$ & x^2 + 2y^2 - 4xz - 2yz + 5z^2 + 4xw - 8zw + 15w^2 & 39\\
\hline
$5$ & x^2 + 2y^2 - xz - 2yz + 7z^2 + 5xw + 3yw - 4zw + 14w^2 &  46\\
$7$ & x^2 + 2y^2 - 3xz - 2yz + 5z^2 + 4xw - 4zw + 30w^2 & 46\\
\hline
$5$ & x^2 + 2y^2 - 2xz - yz + 7z^2 + xw + zw + 14w^2 &  47\\
$7$ & x^2 + 2y^2 - 2yz + 5z^2 + 2yw + zw + 15w^2 & 47\\
\hline
$5$ & x^2 + 2y^2 - xz - yz + 7z^2 + 2xw + 3yw + 8zw + 21w^2 &  53\\
\hline
$7$ & x^2 + 2y^2 - 2yz + 5z^2 + 4zw + 15w^2 & 55\\
\hline
$5$ & x^2 -4xz +2y^2 +8yw +7z^2 +26w^2 & 58\\
\hline
$5$ & x^2 + 2y^2 - 2xz - 2yz + 7z^2 + 3xw + 2yw - 4zw + 13w^2 &  61\\
\hline
$5$ & x^2 + 2y^2 - xz - 2yz + 7z^2 + 5xw + yw - 3zw + 14w^2 &  62\\
$7$ & x^2 + 2y^2 - 3xz - 2yz + 5z^2 + 8yw + 30w^2 & 62\\
\hline
$5$ & x^2 + 2y^2 - 4xz + 7z^2 + 3xw + yw - 3zw + 26w^2 &  74\\
\hline
$5$ & x^2 + 2y^2 - xz - 2yz + 7z^2 + xw + yw - zw + 14w^2 & 78\\
$7$ & x^2 + 2y^2 - 4xz + 5z^2 + 4xw - 4zw + 30w^2 & 78\\
\hline
$7$ & x^2 + 2y^2 - 2xz + 5z^2 + xw + yw + 3zw + 30w^2 & 142\\
\hline
\end{tabular}
\captionsetup{size=small,width=94mm}
\caption{Quadratic forms representing all integers smaller than a given truant $t$ and coprime to the prime $p$ indicated on the left.}\label{table:forms4truants}
\end{table}

\end{document}